\definecolor{grey}{rgb}{0.75,0.75,0.75}
\newtheorem{theorem}{Theorem}[section]
\newtheorem{corollary}[theorem]{Corollary}
\newtheorem{proposition}[theorem]{Proposition}
\theoremstyle{definition}
\theoremstyle{remark}
\newtheorem{remark}[theorem]{Remark}
\newcommand{\Pn}{\mathbb{P}^2}
\newcommand{\Rn}{\mathbb{R}^2}
\newcommand{\defn}[1]{{\em #1}}
\newcommand{\drawat}[3]{\makebox[0pt][l]{\raisebox{#2}{\hspace*{#1}#3}}}
\begin{document}

\title[Triangles in arrangements of lines and pseudo-lines]{On simple arrangements of lines and pseudo-lines in $\Pn$ and $\Rn$ with the maximum number of triangles}

\author{Nicolas Bartholdi}
\address{N. Bartholdi, Universit\' e de Gen\`eve,
Section de math\'ematiques, 
2-4 rue du Li\`evre,
CP 64, 1211 Gen\`eve 4 (Switzerland)}

\author{J\'er\'emy Blanc}
\address{J. Blanc, Laboratoire J.A. Dieudonn\'e (UMR 6621),
Universit\'e de Nice Sophia Antipolis - C.N.R.S., 
Facult\'e des Sciences - Parc Valrose,
06108 Nice cedex 2 (France)}
\author{S\'ebastien Loisel}
\address{S. Loisel, Department of Mathematics,
Wachman Hall,
1805 North Broad Street,
Temple University, 
Philadelphia, PA 19122
(USA)}

\subjclass[2000]{52B05, 52C30, 68U05}


\maketitle
\section{Introduction}
 \label{Sec:Intro}
 A curve $\Gamma\subset \Pn=\Pn(\mathbb{R})$ (respectively $\subset \Rn$) is a \defn{projective} (respectively an \defn{affine}) \defn{pseudo-line} if there is a homeomorphism $\phi:\Pn \rightarrow \Pn$ (respectively $\phi:\Rn \rightarrow \Rn$) such that $\phi(\Gamma)$ is a line.
 
 A \defn{projective} (respectively an \defn{affine}) \defn{arrangement of $n$ pseudo-lines} is a set of $n$ pseudo-lines in $\Pn$ (respectively in $\Rn$), such that any pair of pseudo-lines intersects in exactly one point. A \defn{projective} (respectively an \defn{affine}) \defn{arrangement of lines} is such an arrangement where each pseudo-line is a line. Note that there usually is no homeomorphism $\phi$ of the plane turning a pseudo-line arrangement $\mathcal{A}$ into a line arrangement $\phi(\mathcal{A})$ (\cite{bib:Gru}, page 42, Theorem 3.2). Our sole interest is with \defn{simple arrangements}, i.e. arrangements without multiple intersections.

A simple projective arrangement $\mathcal{A}$ of $n$ lines decomposes the projective plane $\Pn$ into ${n(n-1)}/2+1$ polygons; we will denote by $p_3(\mathcal{A})$ the number of \defn{triangles} obtained. We can do the same for pseudo-lines, as a triangle is a region delimited by exactly three pseudo-lines of the arrangement. Similarly, in the Euclidean plane $\Rn$, we denote by $a_3(\mathcal{A'})$ the number of (bounded) triangles delimited by an affine arrangement $\mathcal{A'}$.

It was originally proposed by Gr\"unbaum \cite{bib:Gru} to look for arrangements with many triangles, and there is already a substantial literature on this question.

Denote by $p_3^s(n)$ (respectively $a_3^s(n)$) the maximal number of triangles that can be obtained with a simple arrangement of $n$ lines in the projective plane (respectively in the Euclidean plane).
We denote by $\overline{p_3^s}(n)$ and $\overline{a_3^s}(n)$ the same notions for pseudo-lines. A projective arrangement $\mathcal{A}$ of $n$ pseudo-lines such that $p_3(\mathcal{A})=\overline{p_3}(n)$ is classically called \emph{$p_3$-maximal}. Here we will only say that the arrangement is \emph{maximal}, and will use the same terminology for affine arrangements.

Then, an easy observation on the number of segments shows that if $n\geq 4$, the following relations occur:
\begin{equation}\label{stupidbound}
\begin{array}{cccccc}
p_3^s(n)&\leq& \overline{p_3^s}(n)&\leq& n(n-1)/3\\
\rotatebox[origin=c]{270}{\ensuremath \geq} & & \rotatebox[origin=c]{270}{\ensuremath \geq} & & \rotatebox[origin=c]{270}{\ensuremath \geq}\\
a_3^s(n)&\leq& \overline{a_3^s}(n)&\leq & n(n-2)/3.\end{array}\end{equation}
We will say that a segment is \emph{used} if it is a part of one triangle of the arrangement, and say that it is \emph{unused} otherwise. 
An arrangement satisfying the equality with the bound above is an arrangement whose segments are all used in one triangle -- we will say in this case that it is a \emph{perfect arrangement}. Note that a perfect arrangement is maximal, but the converse is false in general.

There is currently no known $n$ such that  $\overline{p_3^s}(n)\gneqq p_3^s(n)$ or $\overline{a_3^s}(n)\gneqq a_3^s(n)$.

Infinitely many examples of integers $n \equiv 0,4\pmod{6}$ are known to satisfy $\overline{p_3^s}(n)=n(n-1)/3$ (see \cite{bib:Ha1}, \cite{bib:Ha2}, \cite{bib:Rou1}), an algorithm to find these was given in \cite{bib:BoRoSt}, and the only counterexample previously known is $n=12$ (see \cite{bib:Rou2}). A construction in straight lines has been given in \cite{bib:FoR} to prove that  $p_3^s(n)=n(n-1)/3$ for $n=2 \cdot 2^t+2$, for any integer $t\geq 0$ -- we generalise this for more infinite sequences in Theorem \ref{TheoNewSequence}.

The projective examples of pseudo-lines lead to similar affine configurations by putting one of the pseudo-lines at infinity and by removing it. In particular, $\overline{p_3^s}(n)=n(n-1)/3$ if and only if $\overline{a_3^s}(n-1)=(n-1)(n-3)/3$, and the same if true for arrangements of straight lines (i.e. for $p_3^s$ and $a_3^s$). There exist thus infinitely many examples of integers $n\equiv 3,5 \pmod{6}$ such that $\overline{a_3^s}(n)=n(n-2)/3$, and we have also  $a_3^s(n)=n(n-2)/3$ for $n=2 \cdot 2^t+1$.

The projective odd case is worse than the even case: it was observed by J. Granham (\cite{bib:Gru}, page 26, Theorem 2.21) that \begin{equation}\label{eq:Granham}\overline{p_3^s}(n)\leq n(n-2)/3\hspace{0.5 cm}\mbox{\it if $n>3$ is odd}.\end{equation} 
Conversely, the \emph{affine} even case is worse than the odd case. 
We give a new bound in this case:
\begin{theorem}\label{Thm:Affinebound}
If $n$ is an even integer, then $\overline{a_3^s}(n)\leq \lfloor n(n-7/3)/3\rfloor$.\end{theorem}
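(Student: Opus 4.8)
The plan is to combine the elementary segment count behind \eqref{stupidbound} with Granham's projective bound \eqref{eq:Granham}, the extra input being a parity phenomenon that is available only when $n$ is even.

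First I would record the exact form of the segment count. In a simple affine arrangement $\mathcal{A}$ of $n$ pseudo-lines, each pseudo-line carries exactly $n-2$ bounded segments, for a total of $n(n-2)$. A given bounded segment $s$, lying on a pseudo-line $\ell$ and joining the crossings of $\ell$ with two pseudo-lines $\ell',\ell''$, can bound a triangle on only one of its two sides: a triangle on the other side would force $\ell'$ and $\ell''$ to meet a second time, which is impossible. Hence each bounded segment lies in at most one triangle, so $3a_3(\mathcal{A})=\#\{\text{used segments}\}\le n(n-2)$. Writing $W$ for the number of unused bounded segments, the theorem is \emph{equivalent} to the assertion that $W\ge n/3$ whenever $n$ is even: indeed $3a_3=n(n-2)-W$, so $W\ge n/3$ gives $a_3\le (n(n-2)-n/3)/3=n(n-7/3)/3$, and the floor comes for free since $a_3\in\mathbb{Z}$.

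Next I would pass to the projective closure $\mathcal{A}^{+}$, the simple projective arrangement of the $n+1$ pseudo-lines obtained by adjoining the line $\ell_\infty$ at infinity. Every bounded triangle of $\mathcal{A}$ is a triangle of $\mathcal{A}^{+}$ not meeting $\ell_\infty$, while the remaining triangles of $\mathcal{A}^{+}$ are exactly those incident with $\ell_\infty$; writing $t_\infty$ for their number, $p_3(\mathcal{A}^{+})=a_3+t_\infty$. Since $n+1$ is odd (and $n+1>3$ for $n\ge 4$), Granham's inequality \eqref{eq:Granham} applies and gives $a_3+t_\infty=p_3(\mathcal{A}^{+})\le (n+1)(n-1)/3$. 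The line $\ell_\infty$ meets the $n$ affine pseudo-lines in $n$ points, hence carries $n$ arcs, each bounding at most one triangle, so $t_\infty\le n$; moreover a triangle counted by $t_\infty$ uses, besides its arc on $\ell_\infty$, exactly two unbounded rays of $\mathcal{A}$. Keeping track of which arcs of $\ell_\infty$ and which rays are used is what translates the deficiency measured by Granham's bound into information about $W$.

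The crux — and the step I expect to be the main obstacle — is to couple $a_3$ and $t_\infty$ so as to force $W\ge n/3$ precisely when $n$ is even. Granham's bound alone only yields $a_3\le (n+1)(n-1)/3=(n^2-1)/3$, which is weaker than the target, so one must show that a near-perfect affine arrangement (few unused bounded segments) is necessarily forced to create enough triangles at infinity, i.e. a matching lower bound on $t_\infty$. I expect this to rest on a parity argument carried on the $n$ arcs of $\ell_\infty$ and the $2n$ rays: because $n$ is even, the used arcs and incident rays cannot all be consumed by triangles, exactly as the odd parity of $n$ drives Granham's improvement in the projective case. A genuine difficulty here, which small examples already expose, is that the incidence between a bounded segment and its triangle is \emph{not} local — the apex of a triangle need not lie over its base, so a naive per-pseudo-line ``alternation'' criterion fails. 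The parity must therefore be organised globally, through the cyclic order of the $2n$ ends at infinity (equivalently, on the wiring-diagram/oriented-matroid encoding of $\mathcal{A}$). Establishing this global parity, and verifying that it produces exactly the constant $n/3$ rather than something weaker, is where the real work lies.
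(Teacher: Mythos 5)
Your opening reduction is correct and matches the paper's starting point: writing $W$ for the number of unused bounded segments, $3a_3(\mathcal{A})=n(n-2)-W$, so the theorem is equivalent to $W\geq n/3$ for $n$ even. But from there the proposal contains no proof: the entire content of the theorem is packed into the ``global parity'' step that you explicitly defer (``where the real work lies''), so there is a genuine gap. Moreover, the route you choose makes that gap especially hard to close. Combining Granham's bound (\ref{eq:Granham}) for the projective closure with $p_3(\mathcal{A}^{+})=a_3+t_\infty$ shows that to contradict $a_3>n(n-7/3)/3$ you would need $t_\infty>(n+1)(n-1)/3-n(n-7/3)/3=(7n-3)/9$, i.e. roughly $0.78\,n$ of the at most $n$ possible triangles at infinity. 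You offer no mechanism for producing these triangles, and note that the implication you need (``$W<n/3$ forces $t_\infty>(7n-3)/9$'') is vacuously true once the theorem is known, since its hypothesis is never realized; it therefore cannot be established by analyzing the structure of such arrangements except by re-proving the theorem by some other means. In short, the coupling between the affine deficiency $W$ and triangles at infinity is not just unproven -- there is no evident strategy for it.

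For contrast, the paper's argument is local and never passes to the projective closure; it is also simpler than the global machinery you anticipate. Suppose a pseudo-line $L$ touches the maximal number $n-2$ of triangles $t_1,\dots,t_{n-2}$, let $M,N$ be the pseudo-lines crossing $L$ at its two extreme crossings, and let $\Delta$ be the region they bound with $L$. Consecutive triangles along $L$ lie on opposite sides of $L$ (two triangles sharing the segment of a crossing pseudo-line adjacent to a common vertex are impossible), hence alternately inside and outside $\Delta$; since $n-2$ is \emph{even}, one of the two extreme triangles, say $t_{n-2}$, lies outside $\Delta$, and then the segment of $N$ issuing from $L\cap N$ on the other side of $t_{n-2}$ can bound no triangle at all -- here the affine structure enters, because beyond $N$ the line $L$ has only an unbounded ray. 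Thus every ``full'' pseudo-line forces an unused segment with an endpoint on it, and by simplicity each unused segment is charged by at most two full pseudo-lines. So if $m$ pseudo-lines are full, then $W\geq m/2$; on the other hand the number of used segments is at most $m(n-2)+(n-m)(n-3)=n(n-3)+m$. Assuming $a_3>n(n-7/3)/3$, the first inequality gives $m\leq 2n/3$ and then the second gives $3a_3\leq n(n-7/3)$, a contradiction. This is exactly the per-pseudo-line alternation criterion you dismissed as ``naive''; organized relative to the region $\Delta$ rather than by placing apexes over bases, it works, and it is what produces the constant $n/3$ without any lower bound on triangles at infinity.
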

The bound of Theorem \ref{Thm:Affinebound} is reached for  $4,6,10,16$ pseudo-lines but not for $8,12,14$ pseudo-lines (see Theorem \ref{thm:resultssmallvalues}).
Note that adding one line to an affine perfect arrangement of $n-1\equiv 3,5\pmod{6}$ lines, we obtain infinitely many examples of values of $n\equiv 0,4 \pmod{6}$ lines where $\overline{a_3^s}(n)\geq n(n-5/2)/3$, which is close to the polynomial of Theorem \ref{Thm:Affinebound}. It would be interesting to find the best polynomial upper bound, which is between the two above. Note that there exists no even integer $n$ where $\overline{a_3^s}(n)>n(n-5/2)/3$ has been proved.

\bigskip

Remark that the bounds of (\ref{stupidbound}) are sometimes not integers, and thus may not be attained, even if the parity is good. In the affine odd case, taking the integer part of (\ref{stupidbound}) is a tight bound, as we will provide infinitely many examples of maximal arrangements of $n\equiv 1 \pmod{6}$ lines with $\lfloor n(n-2)/3 \rfloor$ triangles (Theorem \ref{TheoNewSequence}). However, in the projective even case, the bound may be improved, to give the following result, which seems to be already known (see \cite{bib:Rou3} Table I), but we were not able to find a proof in the literature.
\begin{proposition}\label{proptwomodsix}
If $n \equiv 2\pmod{6}$, then $\overline{p_3^s}(n)\leq \lfloor n(n-1)/3 \rfloor-1$.
\end{proposition}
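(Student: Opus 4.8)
The plan is to average the affine bound of (\ref{stupidbound}) over all possible choices of pseudo-line sent to infinity, and to exploit that $(n-1)(n-3)\equiv 2\pmod 3$ precisely when $n\equiv 2\pmod 6$, so that the rounding in the affine bound costs something that accumulates over the $n$ choices. Concretely, I expect to prove the sharper statement $p_3(\mathcal{A})\le\lfloor n(n-1)/3\rfloor-1$ for every simple projective arrangement $\mathcal{A}$ of $n$ pseudo-lines.

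First I would fix such an arrangement $\mathcal{A}$ and a pseudo-line $\ell\in\mathcal{A}$. Sending $\ell$ to infinity and deleting it yields a simple affine arrangement $\mathcal{A}_\ell$ of the remaining $n-1$ pseudo-lines in $\Rn$, exactly the correspondence already used in the introduction. I would then split the triangles of $\mathcal{A}$ according to whether their closure meets $\ell$. Every vertex of the arrangement lying on $\ell$ is of the form $\ell\cap m$, so the two pseudo-lines through it are $\ell$ and $m$; hence a triangle of $\mathcal{A}$ whose closure meets $\ell$ must have one of its edges on $\ell$ (it cannot touch $\ell$ in a single vertex only) and therefore has $\ell$ as one of its three bounding pseudo-lines. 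Consequently the triangles of $\mathcal{A}$ disjoint from $\ell$ are exactly the bounded triangles of $\mathcal{A}_\ell$, while those meeting $\ell$ are exactly the triangles with a side on $\ell$; writing $t_\ell$ for the number of the latter, this gives the exact split $p_3(\mathcal{A})=a_3(\mathcal{A}_\ell)+t_\ell$.

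Next I would sum this identity over the $n$ pseudo-lines of $\mathcal{A}$. Since each triangle has exactly three sides, each lying on one pseudo-line, we have $\sum_{\ell}t_\ell=3\,p_3(\mathcal{A})$, and therefore
\[
(n-3)\,p_3(\mathcal{A})=\sum_{\ell\in\mathcal{A}}a_3(\mathcal{A}_\ell).
\]
Each $\mathcal{A}_\ell$ is a simple affine arrangement of $n-1$ pseudo-lines, so by (\ref{stupidbound}) we get $a_3(\mathcal{A}_\ell)\le (n-1)(n-3)/3$, and as $a_3(\mathcal{A}_\ell)$ is an integer, $a_3(\mathcal{A}_\ell)\le\lfloor (n-1)(n-3)/3\rfloor$. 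Now I would insert the hypothesis $n\equiv 2\pmod 6$: then $(n-1)(n-3)\equiv 2\pmod 3$, so this floor equals $\bigl((n-1)(n-3)-2\bigr)/3$, and the displayed identity yields
\[
(n-3)\,p_3(\mathcal{A})\le \frac{n\bigl((n-1)(n-3)-2\bigr)}{3}.
\]
Dividing by $n-3$ and using $n(n-1)\equiv 2\pmod 3$, a short computation gives $p_3(\mathcal{A})\le \lfloor n(n-1)/3\rfloor-\tfrac{2}{n-3}$. Since $0<\tfrac{2}{n-3}<1$ for $n\ge 8$ and $p_3(\mathcal{A})$ is an integer, this forces $p_3(\mathcal{A})\le\lfloor n(n-1)/3\rfloor-1$, as required (the cases $n\le 3$ being vacuous).

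I expect the main obstacle to lie in the first step: one must justify cleanly that a triangle of $\mathcal{A}$ meeting $\ell$ has a full side on $\ell$, so that the split $p_3=a_3(\mathcal{A}_\ell)+t_\ell$ is exact with no triangle left unaccounted for. The other point worth flagging is the arithmetic that singles out $n\equiv 2\pmod 6$ among even $n$: only then is $(n-1)(n-3)\equiv 2\pmod 3$, so only then does the floor in the affine bound create a genuine deficit; for $n\equiv 0,4\pmod 6$ the same averaging reproduces nothing beyond the trivial bound $n(n-1)/3$, consistent with the existence of perfect arrangements there. The final integrality squeeze is then immediate.
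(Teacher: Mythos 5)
Your proof is correct, but it takes a genuinely different route from the paper's. Both arguments rest on the same two ingredients --- deleting a pseudo-line $\ell$ turns $\mathcal{A}$ into an affine arrangement $\mathcal{A}_\ell$ of $n-1$ pseudo-lines subject to the bound $a_3(\mathcal{A}_\ell)\le (n-1)(n-3)/3$ of (\ref{stupidbound}), and a triangle of $\mathcal{A}$ survives this deletion exactly when $\ell$ is not one of its three bounding pseudo-lines --- but they exploit them differently. The paper argues by contradiction with a single, well-chosen line: if $p_3(\mathcal{A})=(n(n-1)-2)/3$, then since the $n(n-1)\equiv 2\pmod 3$ segments cannot all be used by triangles, some segment is unused, so some pseudo-line touches at most $n-2$ triangles; deleting that line leaves at least $(n(n-1)-2)/3-(n-2)=(n-2)^2/3>(n-1)(n-3)/3$ affine triangles, a contradiction. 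You instead sum the exact identity $p_3(\mathcal{A})=a_3(\mathcal{A}_\ell)+t_\ell$ over all $n$ choices of $\ell$, use $\sum_\ell t_\ell=3\,p_3(\mathcal{A})$ to get $(n-3)\,p_3(\mathcal{A})=\sum_\ell a_3(\mathcal{A}_\ell)$, then floor each summand; the computation $p_3(\mathcal{A})\le\lfloor n(n-1)/3\rfloor-\tfrac{2}{n-3}$ plus integrality finishes it (and indeed only positivity of $\tfrac{2}{n-3}$ is needed, not that it is less than $1$). I checked your geometric justification of the split --- simplicity forces any triangle whose closure meets $\ell$ to have a full side on $\ell$ --- and your arithmetic; both are sound. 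In effect the paper uses the inequality $p_3(\mathcal{A})-t_\ell\le a_3(\mathcal{A}_\ell)$ for one $\ell$ with $t_\ell\le n-2$, while you use the equality for every $\ell$ and average. What the paper's route buys is brevity: one pigeonhole plus one deletion. What yours buys is a direct (non-contradiction) bound, no need to locate a special line, and slightly wider scope: your argument only uses $n\equiv 2\pmod 3$, so it also covers $n\equiv 5\pmod 6$, where it is however subsumed by Granham's bound (\ref{eq:Granham}). One shared caveat: both proofs, and the statement itself, implicitly require $n\ge 8$, since (\ref{stupidbound}) needs at least four pseudo-lines and for $n=2$ the asserted bound is negative; so your remark that the cases $n\le 3$ are ``vacuous'' is slightly off --- $n=2$ is excluded by convention rather than by vacuity --- but this affects the paper's formulation equally.
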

Note that this bound could be improved, as there is still no known example where the equality occurs.

\bigskip

We give then in Proposition \ref{Prp:duplicationG} a way to obtain a good affine arrangement of $2n-1$ lines starting from another good one of $n$ lines. The construction is homeomorphic to those of \cite{bib:Ha1} and \cite{bib:FuP} but has two advantages: it is explicit and may be used starting from non-perfect arrangements. This gives in particular the following new sequences:

\begin{theorem}\label{TheoNewSequence}For any integer $t \geq 0$ we have
\begin{center}$\begin{array}{lllll}
\mbox{if }n=14 \cdot 2^t +1,& a_3^s(n)&=&n(n-2)/3;\\
\mbox{if }n=6 \cdot 2^t+1,&a_3^s(n)&=& \lfloor n(n-2)/3 \rfloor=(n(n-2)-2)/3;\\
\mbox{if }n=18 \cdot 2^t+1,&\overline{a_3^s}(n)&=& \lfloor n(n-2)/3 \rfloor=(n(n-2)-2)/3.\end{array}$\end{center}
\end{theorem}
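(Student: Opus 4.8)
The plan is to treat all three families uniformly. Each is of the shape $n=a\cdot 2^{t}+1$, with $a=14,6,18$, and the doubling $n\mapsto 2n-1$ of Proposition~\ref{Prp:duplicationG} sends $a\cdot 2^{t}+1$ to $a\cdot 2^{t+1}+1$; thus each family is generated from its $t=0$ member by iterating this operation. The theorem therefore splits into producing a base arrangement for each family and tracking the triangle count under the doubling. The upper bounds cost nothing: by (\ref{stupidbound}) we have $a_3^s(n)\le\overline{a_3^s}(n)\le n(n-2)/3$, and when $n\equiv1\pmod6$ the number $n(n-2)/3$ is not an integer, so any integral triangle count is at most $\lfloor n(n-2)/3\rfloor=(n(n-2)-2)/3$. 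Only the constructions (the lower bounds) remain.

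The right invariant is the number of unused segments $u=n(n-2)-3a_3$, which is non-negative by (\ref{stupidbound}) and counts the bounded segments lying on no triangle (in a simple affine arrangement a bounded segment borders at most one triangle, so $3a_3$ is the number of used segments). A perfect arrangement is the case $u=0$, while reaching the floor for $n\equiv1\pmod6$ means exactly $u=2$. I would first record the quantitative effect of Proposition~\ref{Prp:duplicationG}: passing from $n$ lines to $2n-1$ lines adds exactly $(n-1)^2$ triangles, and this is equivalent to preserving $u$, since $3\bigl(a_3+(n-1)^2\bigr)=\bigl(n(n-2)-u\bigr)+3(n-1)^2=(2n-1)(2n-3)-u$. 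Feeding this into an induction on $t$ keeps $u$ equal to its base value throughout each family; hence $u=0$ along the first family (perfect, giving $a_3=n(n-2)/3$) and $u=2$ along the other two (the floor value $(n(n-2)-2)/3$), exactly as claimed.

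It remains to exhibit one base arrangement per family with the correct value of $u$, realised by straight lines when the statement asserts $a_3^s$ and by pseudo-lines when it asserts $\overline{a_3^s}$. The family $6\cdot2^t+1$ is the easy one: four lines in general position in $\Rn$ already give $a_3=2$ with $u=2$, and a single doubling reaches $n=7$ with $a_3=11$, all by straight lines, which is why the conclusion is about $a_3^s$. For the other two families the predecessor of the base under $n\mapsto2n-1$ would be the even value $n=8$ (family $14\cdot2^t+1$) or $n=10$ (family $18\cdot2^t+1$), and here Theorem~\ref{Thm:Affinebound} intervenes: it forbids $u=0$ at $n=8$ and $u=2$ at $n=10$, because the required counts $16$ and $26$ exceed $\lfloor 8(8-7/3)/3\rfloor=15$ and $\lfloor 10(10-7/3)/3\rfloor=25$. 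So the bases cannot be lowered, and one must construct directly a perfect straight-line arrangement of $15$ lines ($65$ triangles, $u=0$) and a pseudo-line arrangement of $19$ pseudo-lines ($107$ triangles, $u=2$).

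Building these two base arrangements is the main obstacle. The $15$-line perfect configuration is, after sending a line to infinity, a perfect simple arrangement of $16$ straight lines in $\Pn$ with $80$ triangles, which is not among the sequences of \cite{bib:FoR}; the $19$-pseudo-line arrangement with only two unused segments is the kind of dense configuration that is naturally described by a wiring diagram and checked directly. The delicate points are verifying that each candidate is genuinely \emph{simple} and that its unused count is exactly $0$, respectively $2$; the first also requires exhibiting an honest straight-line model, whereas for the second we only claim, and only need, a pseudo-line model. Once the two bases are in hand, Proposition~\ref{Prp:duplicationG} together with the induction above settles all three families, and the matching upper bounds turn the inequalities into equalities.
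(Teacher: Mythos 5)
Your skeleton is the same as the paper's --- upper bounds from (\ref{stupidbound}), the doubling $n\mapsto 2n-1$ of Proposition \ref{Prp:duplicationG} adding exactly $(n-1)^2$ triangles (equivalently, as you nicely phrase it, preserving the unused-segment count $u=n(n-2)-3a_3$), and one base arrangement per family --- and your arithmetic for the invariant $u$ is correct. But two steps are genuinely broken or missing.

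First, your base for the family $6\cdot 2^t+1$ does not work as stated: you cannot reach $n=7$ by ``a single doubling'' of a $4$-line arrangement. Proposition \ref{Prp:duplicationG} applies only to an arrangement $\{Y_0,L_1,\dots,L_n\}$ of $n+1$ lines with $n\geq 2$ \emph{even} (the lines other than the two central ones must come in pairs meeting $Y_0$ at $\tan(\pm\alpha)$), and Corollary \ref{Coro:PseudoSeq} likewise requires an odd number of pseudo-lines; a $4$-line arrangement satisfies neither hypothesis. The paper instead writes down an explicit $7$-line arrangement with $11$ triangles and doubles from there. Second, and more fundamentally, the two remaining bases --- a perfect straight-line arrangement of $15$ lines ($65$ triangles) and a $19$-pseudo-line arrangement with $107$ triangles --- are precisely the nontrivial content of the theorem, and you defer them as ``the main obstacle'' without constructing them; nothing in your argument produces them. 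The paper exhibits both explicitly (equations for the $15$ lines, a drawn arrangement for the $19$ pseudo-lines). Note also a subtler point for the two straight-line families: being perfect (or maximal) is not enough to iterate. Proposition \ref{Prp:duplicationG} requires the lines to cross $Y_0$ at the specific abscissae $\tan(k\pi/n)$ with the two central lines crossing near the origin, and --- unlike the pseudo-line case, where Corollary \ref{Coro:PseudoSeq} lets one stretch and rearrange by a homeomorphism --- a straight-line arrangement cannot be renormalized this way after the fact. This is why the paper does not simply invoke Simmons' known perfect $15$-line arrangement but builds a new one in the required form, and why it keeps the parameter $\epsilon$ small depending on $t$ (via the Remark after the proposition, $|a_{n-1}|,|a_n|<1/2n$) so that the hypotheses persist under iteration. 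Your induction tracks only $u$, so even granting the bases, the straight-line cases remain incomplete as written.
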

In particular, this shows that the best polynomial upper bound for $a_3^s(n)$, $\overline{a_3^s}(n)$, $p_3^s(n)$ and $\overline{p_3^s}(n)$, $n\equiv 1 \pmod{6}$ is $(n(n-2)-2)/3=\lfloor n(n-2)/3\rfloor$.

\bigskip

Finally, we are able to describe the explicit values of $a_3^s(n)$, $\overline{a_3^s}(n)$, $p_3^s(n)$, $\overline{p_3^s}(n)$ for small values of $n$. A computer program -- described in Section \ref{Sec:DescAlgo} -- allows us to find explicitly some values of $\overline{a_3^s}(n)$. Using the bounds and sequences described above, and the relation between $\overline{a_3^s}(n)$ and $\overline{p_3^s}(n+1)$ -- adding the line at infinity -- we obtain the following result:

\begin{theorem}
\label{thm:resultssmallvalues}
The values of $a_3^s(n)$, $\overline{a_3^s}(n)$, $p_3^s(n)$, $\overline{p_3^s}(n)$ are given in the table
\begin{center}\begin{flushleft}$\begin{array}{|c|c|c|c|c|c|c|c|c|c|c|c|c|c|c|c|c|c|c|c|c|c|c|c|c|c|c|c|}
\hline
n & 3 & 4 & 5 & 6 & 7 & 8 & 9 & 10 & 11 & 12 & 13 & 14 & 15 & 16 \\
\hline
\overline{p_3^s}(n)&\cellcolor{grey}{ \bf{4} }&\cellcolor{grey}{ \bf{4} }& \cellcolor{grey}{ \bf{5} } &\cellcolor{grey}{ \bf{10} }& \cellcolor{grey}{ \bf{11} }& \cellcolor{grey}{\underline{{\bf 16}}} &  \cellcolor{grey}{\bf{21}} &  \cellcolor{grey}{\bf{30}} & \cellcolor{grey}{\underline{\bf 32}-\it{33}} & \cellcolor{grey}{\underline{42}} & {\bf 47} & \underline{\bf 58}-\colorbox{grey}{\it{59}} & \cellcolor{grey}{\bf{65}} & \cellcolor{grey}{\bf{80}}  \\
\hline
\overline{a_3^s}(n)&\cellcolor{grey}{\bf{1}} & \cellcolor{grey}{\bf{2}} & \cellcolor{grey}{\bf{5}} & {\bf{7}} & \cellcolor{grey}{\bf{11}} &  \underline{\bf 14} & \cellcolor{grey}{\bf{21}} & \bf{25} &  \underline{\bf 32} & \underline{\bf 37} & {\bf 47} & \underline{{\bf 53}} & \cellcolor{grey}{\bf{65}} & \bf{72} \\
\hline
\end{array}$

$\begin{array}{|c|c|c|c|c|c|c|c|c|c|c|c|c|c|c|c|c|c|c|c|c|c|c|c|c|c|c|}
\hline
n  & 17 & 18 & 19 &  20 & 21 & 22 & 23   \\
\hline
\overline{p_3^s}(n) &\cellcolor{grey}{\bf{85}}  & \cellcolor{grey}{\bf{102}} &  {107}  &  \underline{124}-\colorbox{grey}{{\it{125}}} & \cellcolor{grey}{{133}} & \cellcolor{grey}{154}  & \cellcolor{grey}{161} \\
\hline
\overline{a_3^s}(n) &  \cellcolor{grey}{\bf{85}}&  \underline{{\bf 93}}-\textit{94} & {107} & \underline{116}-{\it 117} & \cellcolor{grey}{133}  & \underline{143}-{\it 144}  & \cellcolor{grey}{161}\\
\hline
\end{array}$

$\begin{array}{|c|c|c|c|c|c|c|c|c|c|c|c|c|c|c|c|c|c|c|c|c|c|c|c|c|c|c|}
\hline
n  &24 & 25 & 26 & 27  & 28 & 29 & 30  \\
\hline
\overline{p_3^s}(n) & \cellcolor{grey}{184} & {\bf 191}  &  \underline{\bf 214}-\colorbox{grey}{{\textit{215}}} &\cellcolor{grey}{225}    & \cellcolor{grey}{252} & {\bf 261} &{\bf 290}  \\
\hline
\overline{a_3^s}(n)    & \underline{172}-{\it 173} & {\bf 191} & \underline{\bf 203}-\textit{205}  &\cellcolor{grey}{225}    & \underline{238}-{\it 239}  &{\bf 261}  & \underline{\bf 275}-{\it 276}\\
\hline
\end{array}$.\end{flushleft}\end{center}

An entry of the form $\begin{array}{|c|}\hline {\mathrm{x}}-\it{y} \\ \hline\end{array}$ means that we have an arrangement with $\mathrm{x}$ pseudo-lines but that the best upper bound is $y$. {\bf Bold} entries are known to be stretchable. \underline{Underlined} quantities are strictly smaller than the bounds given above. \begin{tabular}{c}\cellcolor{grey}Grey\end{tabular} entries were previously known (in particular in \cite{bib:Rou3} for pseudo-lines); we include them for completeness.
\end{theorem}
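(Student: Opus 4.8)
The plan is to establish, for every $n$ with $3\le n\le 30$, a matching pair consisting of an upper bound and an explicit arrangement attaining it; for the gap entries of the form $\mathrm{x}-\mathit{y}$ I would instead record the best arrangement I can construct (the value $\mathrm{x}$) together with the best bound I can prove (the value $y$). The upper bounds are obtained by assembling the inequalities already available: the segment-counting bound (\ref{stupidbound}), Granham's bound (\ref{eq:Granham}) in the projective odd case, Theorem \ref{Thm:Affinebound} in the affine even case, and Proposition \ref{proptwomodsix} for $n\equiv 2\pmod 6$. For the underlined entries, which lie strictly below these polynomial bounds, the improved upper bound cannot come from a closed formula and must instead be certified by exhaustive enumeration of combinatorial types, which is the task of the program of Section \ref{Sec:DescAlgo}.

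First I would fix the combinatorial model on which that enumeration runs. A simple projective arrangement of $n$ pseudo-lines is determined, up to homeomorphism of $\Pn$, by its wiring diagram (equivalently, the associated rank-$3$ oriented matroid), and these can be generated incrementally by inserting pseudo-lines one at a time, counting triangles as we go and discarding isomorphic copies. The affine data is then read off from the projective data through the correspondence already noted in the introduction: deleting one pseudo-line $\ell$ from a projective arrangement $\mathcal{A}$ of $n$ pseudo-lines and sending it to infinity yields an affine arrangement of $n-1$ pseudo-lines whose bounded triangles are exactly the triangles of $\mathcal{A}$ disjoint from $\ell$. Thus $\overline{a_3^s}(n-1)$ is the maximum, over all $\mathcal{A}$ and all choices of $\ell$, of that count, and the whole affine row is extracted from the same enumeration used for the projective row.

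For the lower bounds I would exhibit a concrete arrangement realising each attained value: the small cases by explicit diagrams produced by the program, and the larger perfect or near-perfect cases by the families of Theorem \ref{TheoNewSequence} together with the duplication construction of Proposition \ref{Prp:duplicationG}, which feeds a good arrangement of $n$ pseudo-lines into one of $2n-1$. To pin down all four quantities, and not merely the pseudo-line ones, the extremal arrangements must be tested for stretchability: a \textbf{bold} entry is one for which I can produce an arrangement of genuine straight lines with the same triangle count, giving $p_3^s(n)=\overline{p_3^s}(n)$ and $a_3^s(n)=\overline{a_3^s}(n)$ there. Stretchability of a given small arrangement is checked by exhibiting an explicit realisation over the rationals satisfying the required sign pattern.

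The main obstacle is purely computational. The number of combinatorial types of simple pseudo-line arrangements on $n$ elements grows like $2^{\Theta(n^2)}$, so a naive search becomes infeasible well before $n=30$; this is precisely why genuine gaps remain for the larger values, where the upper bound is only the best available polynomial bound and the lower bound is merely the best arrangement found. The delicate part is therefore twofold: making the enumeration reach as far as possible — by incremental insertion, by exploiting the projective and dihedral symmetries to enumerate one representative per isomorphism class, and by pruning any branch whose partial triangle count can no longer beat the current record — and, equally important, certifying that the enumeration is genuinely exhaustive up to isomorphism, since an upper bound proved by computer is only as trustworthy as the completeness of the underlying case analysis.
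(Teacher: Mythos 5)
Your overall architecture --- polynomial upper bounds, exhaustive computer search, explicit constructions, stretchability via explicit coordinates, and the line-at-infinity correspondence --- is the same as the paper's, but two concrete points leave a genuine gap. First, your enumeration runs in the opposite direction from the paper's, and this is not a mere implementation detail. The paper's program (Section \ref{Sec:DescAlgo}) enumerates \emph{affine} wiring diagrams directly, and its decisive feature is pruning by a budget of \emph{unused segments}: when one searches for perfect or near-perfect arrangements the budget is essentially zero, and this is exactly what keeps the search tractable far beyond the range of a full enumeration --- Proposition \ref{Prp:computr} reports the exact affine values $133$, $161$, $225$, $261$ at $n=21,23,27,29$. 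The projective row is then deduced from the affine one together with the bounds (\ref{stupidbound}), (\ref{eq:Granham}) and Proposition \ref{proptwomodsix}. You instead enumerate projective arrangements of $n$ pseudo-lines up to isomorphism and read the affine row off by deleting a line, so to certify $\overline{a_3^s}(n-1)$ you must complete a search one pseudo-line larger than necessary; and since by your own admission this enumeration ``becomes infeasible well before $n=30$,'' your plan cannot certify precisely those exact large-$n$ entries that the theorem asserts.

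Second, you never invoke the previously known values --- the grey entries, in particular Roudneff's table \cite{bib:Rou3}, the perfect projective arrangements for $n\equiv 0,4 \pmod 6$ (\cite{bib:Ha1}, \cite{bib:Ha2}, \cite{bib:Rou1}), and Roudneff's $\overline{p_3^s}(12)=42$ \cite{bib:Rou2} --- and your constructive ingredients cannot replace them. For example, the table asserts $\overline{a_3^s}(21)=133$: the families of Theorem \ref{TheoNewSequence} cover $n=7,13,15,19,25,29,\dots$ but skip $21$, $23$, $27$, and duplication (Corollary \ref{Coro:PseudoSeq}) applied to the best $11$-pseudo-line arrangement ($32$ triangles) yields only $32+10^2=132$ triangles. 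Likewise the exact entries $\overline{p_3^s}(22)=154$, $\overline{p_3^s}(24)=184$, $\overline{p_3^s}(28)=252$ require perfect projective arrangements of those sizes, which come neither from the paper's constructions nor from any search you could complete, but from the cited literature. So, as written, your proposal proves a strictly weaker statement: a table with gap entries of the form $\mathrm{x}$--$\mathit{y}$ in many places where Theorem \ref{thm:resultssmallvalues} asserts exact values.
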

\section{Some new bounds -- Proof of Theorem \ref{Thm:Affinebound} and Proposition \ref{proptwomodsix}}
We prove Theorem \ref{Thm:Affinebound}, i.e. that for any even integer $n$, the inequality $\overline{a_3^s}(n)\leq \lfloor n(n-7/3)/3\rfloor$ holds.
\begin{proof}[Proof of Theorem \ref{Thm:Affinebound}]
Let $\mathcal{A}$ be an affine simple arrangement of $n$ pseudo-lines, with $n\geq 2$ an even number.\\ 
Suppose that a pseudo-line $L \in \mathcal{A}$  contains $n-2$ used segments, i.e. $L$ touches exactly $n-2$ triangles of the arrangement. We denote these triangles by $t_1,...,t_{n-2}$, such that $t_i$ and $t_{i+1}$ have a common vertex for $i=1,...,n-3$, denote by $M$ and $N$ the two pseudo-lines intersecting $L$ in the extremities (such that $M$ touches $t_1$ and $N$ touches $t_{n-2}$) and denote by $\Delta$ the region delimited by the three lines $L$, $M$,$N$ (which is not a "triangle" of our arrangement as other pseudo-lines intersect it). The $n-2$ triangles  touching $L$ are alternatively inside $\Delta$ and outside it. So, either $t_1$ or $t_{n-2}$ is not contained in $\Delta$. Without loss of generality, we assume that $t_{n-2}$ is not contained in $\Delta$ and illustrate the situation in Figure \ref{FigD}.
\begin{figure}[ht]{\includegraphics[width=10cm]{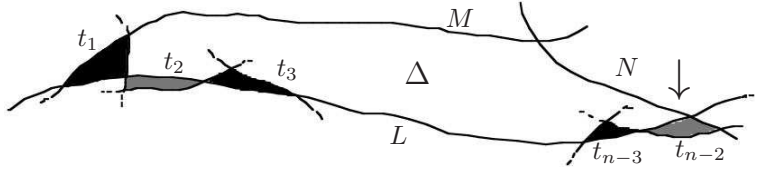}
\drawat{-42.5mm}{18.5mm}{$M$}%
\drawat{-20mm}{12mm}{$N$}%
\drawat{-50mm}{3mm}{$L$}%
\drawat{-91.5mm}{16mm}{$t_1$}%
\drawat{-48mm}{11mm}{\Large $\Delta$}%
\drawat{-80mm}{13mm}{$t_2$}%
\drawat{-65mm}{12mm}{$t_3$}%
\drawat{-23mm}{1mm}{$t_{n-3}$}%
\drawat{-12mm}{1.3mm}{$t_{n-2}$}%
\drawat{-13mm}{10mm}{\huge $\downarrow$}%
} \caption{The situation of the pseudo-line $L$\label{FigD}}\end{figure}
Note that the segment of the line $N$ which starts from $L$ and which is not contained in $t_{n-2}$ is not used. (On the figure, the segment with an arrow).

Then, to every pseudo-line that contains $n-2$ used segments, we associate the unused segment defined above. It does not belong to $L$, but it has one of its extremities on it. 
As the arrangement is simple, a segment cannot be associated to more than two pseudo-lines.

Denote by $m$ the number of pseudo-lines that contain exactly $n-2$ used segments; we associate to them at least $m/2$ unused segments.
Suppose, {\it ab absurdo}, that there are more than ${n(n-7/3)}/{3}$ triangles. Then at least $n(n-7/3)$ segments are used, so $n(n-7/3) + m/2  \leq n(n-2)$, which implies that $m \leq \frac{2}{3} n$.
But then, the number of used segments is at most 
$$m \cdot (n-2) + (n-m) \cdot (n-3) = n \cdot (n-3) + m \leq n \cdot (n-7/3),$$ which is a contradiction.
\end{proof}

\bigskip

We prove now Proposition \ref{proptwomodsix}, i.e. that $\overline{p_3^s}(n)<\lfloor \frac{n(n-1)}{3} \rfloor$ for any positive integer  $n \equiv 2\pmod{6}$.
\begin{proof}[Proof of Proposition \ref{proptwomodsix}]
Suppose that there exists some projective arrangement $\mathcal{A}$ of $n$ pseudo-lines with exactly $\lfloor \frac{n(n-1)}{3} \rfloor= \frac{n(n-1)-2}{3}$ triangles. Since the number of segments is not divisible by $3$, there exists at least one of them which is not touching any triangle. We choose then one pseudo-line that touches at most $n-2$ triangles of the arrangement; we stretch it to a line and remove it to get an affine arrangement of $n-1$ pseudo-lines, which has at least $\frac{n(n-1)-2}{3}-(n-2)$ triangles. But this number is strictly bigger than $\frac{(n-1)(n-3)}{3}$, which is not possible.
\end{proof}

\section{A way to construct maximal arrangements}

\begin{proposition}
\label{Prp:duplicationG}
Let $n\geq 2$ be an even number and let $\mathcal{A}=\{Y_0,L_1,...,L_n\}$ be a simple affine arrangement of $n+1$ lines, given by the equations 
\begin{center} $\begin{array}{rcl}
Y_0&:=&\{(x,y) \in \Rn \ | \ y=0\},\\
L_i&:=&\{(x,y) \in \Rn \ | \ y=m_i (x-a_i)\},\end{array}$\end{center}
 where
	\begin{center}
	$\begin{array}{l}
	\{a_1,...,a_{n-2}\}=\big\{\tan(\alpha)\ \big| \pm \alpha\in \{\frac{\pi}{n},2\frac{\pi}{n},...,\frac{\pi}{2}-\frac{\pi}{n}\}\big\}\\
	-\frac{1}{n} < a_{n-1} < 0 < a_{n} < \frac{1}{n};
	\end{array}$
	\end{center}
	and such that the line $Y_0$ touches exactly $n-1$ triangles (which means that every one of its segments is used in one triangle) of the affine arrangement $\mathcal{A}$.

Then, there exist $n$ lines $M_1,M_2,...,M_{n}$  given by the equations 
\begin{center} $M_i:=\{(x,y) \in \Rn \ | \ y=\mu_i (x-b_i)\}$,\end{center}
where $b_i=\tan(\beta_i)$ and 

\begin{center}
	$\begin{array}{l}
	\{\beta_1,...,\beta_{n}\}=\big\{-\frac{\pi}{2}+\frac{1}{2}\cdot\frac{\pi}{n},-\frac{\pi}{2}+\frac{3}{2}\cdot\frac{\pi}{n},...,-\frac{1}{2}\cdot\frac{\pi}{n},\frac{1}{2}\cdot\frac{\pi}{n},...,\frac{\pi}{2}-\frac{1}{2}\cdot\frac{\pi}{n}\big\}
	\end{array}$
	\end{center}
	 and such that 
the affine arrangement $\mathcal{B}=\{Y_0,L_1,\dots,L_{n},M_{1},...,M_{n}\}$ of $2n+1$ lines is simple and
has exactly $n^2$ triangles more than $\mathcal{A}$;
the line $Y_0$ touches exactly $2n-1$ triangles of the arrangement 
$\mathcal{B}$.

Explicitly, we can take 
\begin{center}$
\mu_i:=\sigma\cdot \frac{m_{\min}}{n^{10}}\cdot \big(\sin\big(2 \beta_i\big)+\frac{1}{n^6\cdot b_i}\big),
$\end{center}
where $\sigma:=1$ if $L_n$ and $L_{n-1}$ intersect in the upper half-plane and $\sigma:=-1$ otherwise, and where $m_{\min}:=\min\{|m_i| \ | \ i=1,..,n\}$.
\end{proposition}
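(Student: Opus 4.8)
The plan is to treat the $n$ new lines $M_1,\dots,M_n$ as a perturbation of $Y_0$: since each slope satisfies $|\mu_i| = O(m_{\min}/n^{10})$, every $M_i$ is nearly parallel to $Y_0$, so I first want to locate all intersection points involving the $M_i$ and show that they sit in a thin horizontal band around $Y_0$. Solving $\mu_i(x-b_i)=m_j(x-a_j)$ shows that $M_i\cap L_j$ lies within $O(m_{\min}/n^{9})$ of the point $(a_j,0)$ where $L_j$ meets $Y_0$; and using $\sin(2\beta)=2b/(1+b^2)$ together with the sum-to-product identities, the main term of $\mu_i$ makes $M_i\cap M_{i'}$ project onto the point $x=\tan(\beta_i+\beta_{i'})$ of $Y_0$. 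In every case the $y$-coordinate is $O(m_{\min}/n^3)$, which (this is where the large powers of $n$ in the formula are spent) I can keep below the distance $\epsilon_0>0$ from $Y_0$ to the nearest triangle of $\mathcal{A}$ having no edge on $Y_0$. Consequently, outside a band $|y|<\epsilon$ the arrangement $\mathcal{B}$ has exactly the same faces as $\mathcal{A}$.

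Next I would establish that $\mathcal{B}$ is simple. Triples among $\{Y_0,L_1,\dots,L_n\}$ are non-concurrent because $\mathcal{A}$ is simple. For triples involving the $M_i$ the danger is precisely the coincidence revealed above: $\beta_i+\beta_{i'}$ is an integer multiple of $\pi/n$, the same lattice as the angles defining the $a_k$, so the main term alone would place $M_i\cap M_{i'}$ directly over some $L_k\cap Y_0$, and would make near-parallel pairs (those with $\beta_i+\beta_{i'}$ close to $\tfrac{\pi}{2}$) coincide. This is exactly what the corrective summand $\tfrac{1}{n^6 b_i}$ is for: I would show that it shifts each projected abscissa $\tan(\beta_i+\beta_{i'})$ by a nonzero amount of strictly smaller order than the spacing between the relevant vertices, so that no third line passes through $M_i\cap M_{i'}$ and, for the formerly parallel pairs, that the intersection exists and remains in the band. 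Checking the finitely many shapes of triple, namely $\{M,M,M\}$, $\{M,M,L\}$, $\{M,M,Y_0\}$, $\{M,L,L\}$ and $\{M,L,Y_0\}$, is routine once this perturbation estimate is in hand.

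Then comes the triangle count, which I expect to be the main obstacle. By the band localization, every triangle of $\mathcal{B}$ either coincides with a triangle of $\mathcal{A}$ lying outside the band, of which there are $p_3(\mathcal{A})-(n-1)$ since $Y_0$ touches exactly $n-1$ triangles of $\mathcal{A}$ by hypothesis, or lies inside the band. It therefore suffices to count the triangles $T$ of $\mathcal{B}$ inside the band and to prove $T-(n-1)=n^2$, that is $T=n^2+n-1$. To count $T$ I would fix the combinatorial model of the band: on $Y_0$ the $2n$ crossings occur in the interleaved order forced by the angles, the half-integer multiples $\beta_i$ separating the integer multiples $a_k$, with the two lines near the origin adjacent in the middle and an $M$-crossing at each extreme, while the order of the mutual crossings $M_i\cap M_{i'}$ is read off from $x=\tan(\beta_i+\beta_{i'})$ as corrected by $\tfrac{1}{n^6 b_i}$. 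From this explicit wiring I would enumerate the faces of the band, isolate the triangular ones and organise them by type (all-$M$; two $M$'s with an $L$ or with $Y_0$; one $M$ with two old sides). The bookkeeping is the real work; the value $T=n^2+n-1$ should fall out once the local pattern along $Y_0$ is described, and it is consistent with the case $n=2$, where $\mathcal{A}$ has one triangle and $\mathcal{B}$ is a simple affine arrangement of five lines with five triangles.

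Finally, I would read off the two remaining assertions from the same model. That $Y_0$ touches exactly $2n-1$ triangles is equivalent to all $2n-1$ of its bounded segments being used, which I would verify directly from the interleaving pattern, checking that the sign $\sigma$, defined by whether $L_{n-1}$ and $L_n$ meet above or below $Y_0$, orients the flat pencil of $M_i$ toward the side on which the central triangle of $\mathcal{A}$ sits, so that the $M_i$ subdivide rather than merely clip the segments of $Y_0$ and thereby raise the count from $n-1$ to $2n-1$. Combining this with $T=n^2+n-1$ yields $p_3(\mathcal{B})=p_3(\mathcal{A})+n^2$, completing the proof.
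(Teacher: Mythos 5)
Your strategy is the same as the paper's --- realize the $M_i$ as a flat pencil hugging $Y_0$, compute all new intersection points explicitly, check that the corrective term $\frac{1}{n^6 b_i}$ breaks the accidental coincidences, and count what happens near $Y_0$ --- and your intermediate estimates agree with the paper's relations (\ref{eqLiMj})--(\ref{eqsigndeltaij}). However, two of your steps have genuine gaps. The localization step fails as stated: the slopes $\mu_i$ are prescribed by the proposition, so your ``band'' has a \emph{fixed} height once $\mathcal{A}$ is given (the new vertices reach heights of order $m_{\min}/n^{4}$, e.g.\ at the pairs with $\beta_i+\beta_j=\pm\pi/2$), and there is no free parameter left to push it below $\epsilon_0$. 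Meanwhile the hypotheses put no lower bound on $\epsilon_0$: the vertex $v=L_{n-1}\cap L_n$, and with it any triangle of $\mathcal{A}$ cornered at $v$ but having no edge on $Y_0$, may lie arbitrarily close to $Y_0$, since only $|a_{n-1}|,|a_n|<1/n$ is required; this is exactly the regime in which the proposition is applied in Theorem \ref{TheoNewSequence}, where $a_{n-1}=-\epsilon$, $a_n=\epsilon$ with $\epsilon$ arbitrarily small. The paper's proof is not a magnitude argument at this point: it proves that every old vertex \emph{except} $v$ has $|y|>m_{\min}/n$ (relation (\ref{eqLiLj})), and it neutralizes the exceptional vertex $v$ by the choice of $\sigma$, which makes every $M_i$ lie strictly on the side of $Y_0$ \emph{opposite} to $v$ for all $|x|<1/n$ (indeed $|x|<1/n<\tan(\pi/2n)\leq|b_i|$ and $\mu_i$ has the sign of $b_i$, so $\mu_i(x-b_i)<0$ when $v$ is in the upper half-plane). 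Your account of $\sigma$ is inverted: you orient the pencil \emph{toward} the side of the central triangle, and with that orientation the $M_i$ would pass above the (possibly arbitrarily low) apex $v$, cut the faces incident to $v$, and the claimed count would fail.

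The second gap is the decisive one: the assertion that the band contains exactly $T=n^2+n-1$ triangles of $\mathcal{B}$, with $Y_0$ touching $2n-1$ of them, \emph{is} the content of the proposition, and you leave it as bookkeeping that ``should fall out''. The target number is right (it matches the paper's count), but nothing in your text forces it. What is missing is the structural lemma the paper extracts from the sign relation (\ref{eqsigndeltaij}): along each $M_i$, between any two consecutive crossings with the lines $L_j$ there lies exactly one crossing with the pencil $\{M_j\}_{j\neq i}\cup\{Y_0\}$, except for a single empty gap near $\tan(2\beta_i)$; consequently exactly $n/2$ pairs of the new pencil cross between consecutive windows around the $a_j$'s, each of the $n-1$ old triangles along $Y_0$ gives rise to $n+1$ triangles of $\mathcal{B}$, and the pairs with $\beta_i+\beta_j=\pm\pi/2$ contribute $n$ further triangles at the extremities, for a gain of $(n-1)(n+1)+n-(n-1)=n^2$ with every one of the $2n-1$ segments of $Y_0$ used. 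Without this interleaving statement (or an equivalent combinatorial description of the band), your proposal is a plan rather than a proof.
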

\begin{remark}
If $|a_{n-1}|$ and $|a_n|$ are smaller than $1/2n$, then the new arrangement $\mathcal{B}$ also satisfies the conditions of the Proposition; this allows us to iterate the process if $|a_{n-1}|$ and $|a_n|$ are arbitrary small.\end{remark}
\begin{proof}
Write $\epsilon_1:=\frac{1}{n^{10}}$, $\epsilon_2:=\frac{1}{n^6}$. Multiplying all the slopes by $-1$ if needed, we may assume that $L_{n}$ and $L_{n-1}$ intersect in the upper half plane.

The explicit values of $\mu_i$ given in the Proposition become thus 

\begin{center}$
\mu_i=\epsilon_1 \cdot m_{\min}\cdot \big(\sin\big(2\beta_i\big)+\frac{1}{b_i}\cdot \epsilon_2\big).
$\end{center}

We will use the fact that $\{1/b_i\ |\ i=1,...,n\}=\{b_i\ |\ i=1,...,n\}$.

We calculate some simple assertions. For $1\leq i \leq n$, we have $\pi/2n<\tan(\pi/2n)\leq |b_i| \leq\tan(\pi/2-\pi/2n)=1/\tan(\pi/2n)<2n/\pi$ and $\pi/2n<\sin(\pi/n)\leq |\sin(2\beta_i)|\leq 1$. This gives -- using the equality $\epsilon_2=n^{-6}$ --  the following relations
\begin{equation}\label{eqbi}
\begin{array}{rcl}
\pi/2n<&|b_i|&<2n/\pi;\\
\pi/2n<&|\sin(2\beta_i)|&\leq 1;\\
1/n<&|\sin(2\beta_i)+\frac{1}{b_i}\cdot \epsilon_2|&<2;\\
1/n^{11}\cdot m_{\min}<&|\mu_i|&<2m_{\min}/n^{10};
\end{array}
\end{equation}
and we see that $\mu_i$, $\sin(2\beta_i)$ and $b_i$ have the same sign.

For $1\leq i\leq n-2$, we obtain similarly the relation
\begin{equation}\label{eqai}
\begin{array}{rcl}
\pi/n<&|a_i|&<n/\pi.\end{array}
\end{equation}
We calculate now some coordinates of intersections of the lines of $\mathcal{B}$.

1. 
The $y$-coordinate of the intersection of $L_i$ and $L_j$ (for $i\not=j$) is equal to $(a_i-a_j)\cdot(\frac{1}{m_i}-\frac{1}{m_j})^{-1}$. Assuming that $\{i,j\}\not=\{n,n-1\}$, we have $|a_i-a_j|\geq \tan(\pi/n)-1/n>\pi/n-1/n>2/n$. Since $|\frac{1}{m_i}-\frac{1}{m_j}|\leq 2/{m_{\min}}$, we obtain the following assertion:
\begin{equation}\label{eqLiLj}
\begin{array}{c}
\mbox{\it The $y$-coordinate of } L_i\cap L_j, \mbox{\it  for }\{i,j\}\not=\{n,n-1\}\\
\mbox{\it  is (in absolute value) bigger than } m_{\min}/n.\end{array}\end{equation}
Note that the lines $L_{n-1}$ and $L_n$ may intersect at a very small $y$-coordinate.

\bigskip

2. 
The $y$-coordinate of the intersection of $L_i$ and $M_j$ is equal to $(a_i-b_j)\cdot (\frac{1}{m_i}-\frac{1}{\mu_j})^{-1}$. We calculate first -- using (\ref{eqbi}) and (\ref{eqai}) -- that $|a_i-b_j|\leq \max  |a_k|+\max |b_k|<n/\pi +2n/\pi=3n/\pi<n$. Secondly, we have $|\frac{1}{m_i}|\leq \frac{1}{m_{\min}}$ and -- using (\ref{eqbi}) -- obtain also $|\frac{1}{\mu_j}|>n^{10}/2m_{\min}$. We see that $|\frac{1}{m_i}-\frac{1}{\mu_j}|>n^{10}/3m_{\min}$.
\begin{equation}\label{eqLiMj}
\begin{array}{c}
\mbox{\it The $y$-coordinate of } L_i\cap M_j, \\
\mbox{\it  is (in absolute value) smaller than } 3m_{\min}/n^9.\\
\mbox{\it The $x$-coordinate is between  $a_i-3/n^9$ and $a_i+3/n^9$.}\end{array}\end{equation}

\bigskip

3. The $x$-coordinate of the intersection of $M_i$ and $M_j$ is equal to \[x_{ij}=\frac{\mu_i b_i-\mu_jb_j}{\mu_i-\mu_j}=\frac{\sin(2\beta_i)b_i-\sin(2\beta_j)b_j}{\sin(2\beta_i)-\sin(2\beta_j)+\epsilon_2\cdot({b_i}^{-1}-{b_j}^{-1})},\]
where $b_i=\tan(\beta_i)$, $b_j=\tan(\beta_j)$. We study now three cases:

3a) If $\beta_i+\beta_j=0$, the $x$-coordinate  $x_{ij}$ is equal to $0$, and the $y$-coordinate is negative. 

3b) Assume that $\beta_i+\beta_j=\pm \pi/2$, which implies that $\sin(2\beta_i)=\sin(2\beta_j)$ and $b_ib_j=1$, whence $1/b_i-1/b_j=b_j-b_i$. We find $x_{ij}=\sin(2\beta_i)\cdot (b_i-b_j)/ (\epsilon_2 \cdot (b_j-b_i))=-\sin(2\beta_i)/\epsilon_2$, which implies -- with (\ref{eqbi}) -- that $|x_{ij}|>(\pi/2n)/ n^{-6}>n^{5}$.

3c) Assume that $\beta_i+\beta_j \notin\{0,\pm \pi/2\}$. The trigonometric identities leads to 
$\sin(2\beta_i)\tan(\beta_i)-\sin(2\beta_j)\tan(\beta_j)=\tan(\beta_i+\beta_j)(\sin(2\beta_i)-\sin(2\beta_j))$, which implies that
\[x_{ij}-\tan(\beta_i+\beta_j)=-\tan(\beta_i+\beta_j)\cdot \frac{\epsilon_2\cdot (1/b_i-1/b_j)}{\sin(2\beta_i)-\sin(2\beta_j)+\epsilon_2\cdot (1/b_i-1/b_j)}.\]
We bound the values of this expression: $\pi/n<2\tan(\pi/2n)\leq |1/b_i-1/b_j| \leq 2/\tan(\pi/2n)<4n/\pi$, and $8/n^2=2/\pi^2\cdot(2\pi/n)^2<1-\cos(2\pi/n)=\sin(\pi/2)-\sin(\pi/2-2\pi/n)\leq |\sin(2\beta_i)-\sin(2\beta_j)|\leq 2$, and $\pi/n<\tan(\pi/n)\leq |\tan(\beta_i+\beta_j)|\leq \tan(\pi/2-\pi/n)=1/\tan(\pi/n)<n/\pi$. We obtain -- since $\epsilon_2=n^{-6}$ -- the following bounds

\begin{equation}\label{eqbb}
\begin{array}{rcl}
\pi/n<&|1/b_i-1/b_j|&<4n/\pi;\\
8/n^2<&|\sin(2\beta_i)-\sin(2\beta_j)|&\leq 2;\\
7/n^2<&|\sin(2\beta_i)-\sin(2\beta_j)+\epsilon_2(1/b_i-1/b_j)|&<3;\\
\pi/n<&|\tan(\beta_i+\beta_j)|&<n/\pi,
\end{array}
\end{equation}
and see that the expressions $\sin(2\beta_i)-\sin(2\beta_j)+\epsilon_2(1/b_i-1/b_j)$ and $\sin(2\beta_i)-\sin(2\beta_j)$ have the same sign.

The bounds (\ref{eqbb}) yield a minimal bound for $|x_{ij}-\tan(\beta_i+\beta_j)|$, which is $(\pi/n)\cdot \epsilon_2 \cdot (\pi/n)/3=\pi^2/3n^2\cdot \epsilon_2>3n^{-8}.$ Similarly, the maximal bound is 
$(n/\pi)\cdot \epsilon_2 \cdot (4n/\pi)/(7/n^2)=4/7\pi^2\cdot n^4\cdot \epsilon_2<n^{-2}$. We obtain the following relation
\begin{equation}\label{eqvaluedeltaij}
\begin{array}{rcl}
3/n^8&<|x_{ij}-\tan(\beta_i+\beta_j)|&<1/n^2.
\end{array}
\end{equation}
We study now the sign of $x_{ij}-\tan(\beta_i+\beta_j)$, which is the same as those of $-\tan(\beta_i+\beta_j)\cdot (1/b_i-1/b_j)\cdot \big(\sin(2\beta_i)-\sin(2\beta_j)\big)=-\big(\sin(2\beta_i)\tan(\beta_i)-\sin(2\beta_j)\tan(\beta_j)\big)\cdot \big(1/\tan(\beta_i)-1/\tan(\beta_j)\big)$.

Note that the function $x\mapsto \sin(2x)\tan(x)=2\sin(x)^2$ on $]-\pi/2;\pi/2[$ acts like $x\mapsto x^2$ (it is an even function, growing on $[0;\pi/2[$). We may thus replace $\sin(2x)\tan(x)$ by $x^2$ in the above expression without changing the sign. Similarly, we may replace $1/\tan(x)$ by $1/x$. The sign of $x_{ij}-\tan(\beta_i+\beta_j)$ is thus the same as the sign of $-({\beta_i}^2-{\beta_j}^2)/ (1/\beta_i-1/\beta_j)=(\beta_i+\beta_j)\cdot \beta_i\cdot \beta_j$.

\begin{equation}\label{eqsigndeltaij}
\begin{array}{c}
\mbox{\it The sign of } x_{ij}-\tan(\beta_i+\beta_j) \\
\mbox{\it  is the same as the sign of } (\beta_i+\beta_j)\cdot \beta_i\cdot \beta_j.
\end{array}\end{equation}

4. We describe now the order of the $x$-coordinates of the intersections of some line $M_i$ with the other lines of $\mathcal{B}$. Assume that $b_i=\tan(\beta_i)>0$ and for $D\in \mathcal{B}$, $D\not=M_i$, denote by $x_D$ the $x$-coordinate of the intersection of $M_i$ and $D$. Recall -- see (\ref{eqLiMj}) -- that $x_{L_j}\in ]a_j-3/n^9;a_j+3/n^9[$ for $j=1,...,n$. Furthermore, since $L_{n-1}$ and $L_{n}$ intersect on the upper half-plane and $a_{n-1}<0<a_n$, we see that $m_{n-1}>0>m_n$. Since $b_i>0$ and $\mu_i>0$, and since $|\mu_i|<|m_n|,|m_{n-1}|$ -- see (\ref{eqbi}) -- the intersection of $M_i$ with $L_{n-1}$ (respectively with $L_n$) has negative (respectively positive) $x$-coordinate. Thus, $x_{L_n} \in ]0;1/n+3/n^9[$ and $x_{L_{n-1}}\in ]-1/n-3/n^9;0[$. The positions of the $X_{L_j}$, for $j=1,...,n$ are given in Figure \ref{FigIntervalles1}.

\begin{figure}[ht]{\includegraphics[width=11cm]{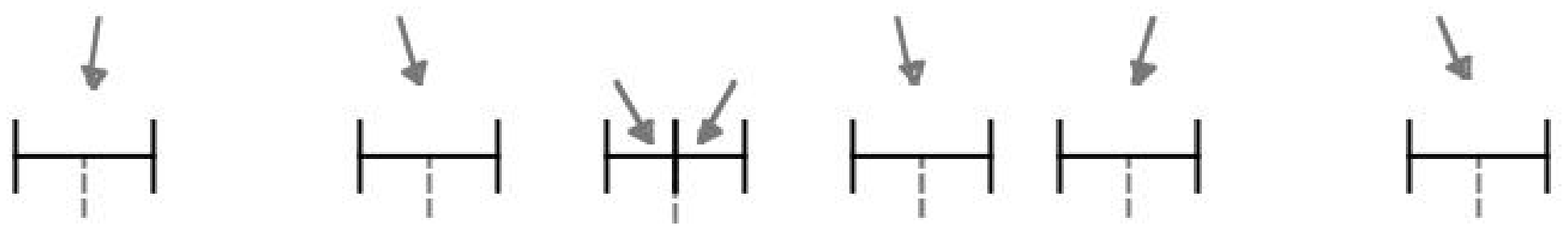}
\drawat{-70.5mm}{11mm}{$x_{L_{n-1}}$}%
\drawat{-59.5mm}{11mm}{$x_{L_n}$}%
\drawat{-115mm}{-3.2mm}{$\tan(-\pi/2+\pi/n)$}%
\drawat{-86mm}{-3.2mm}{$\tan(-\pi/n)$}%
\drawat{-54mm}{-3.2mm}{$\tan(\pi/n)$}%
\drawat{-38mm}{-3.2mm}{$\tan(2\pi/n)$}%
\drawat{-14mm}{-3.2mm}{$\tan(\pi/2-\pi/n)$}%
\drawat{-64.5mm}{-3.2mm}{$0$}%
\drawat{-22mm}{4mm}{$\cdots$}%
\drawat{-95mm}{4mm}{$\cdots$}%
\caption{The disposition of $x_{L_j}$ for $j=1,...,n$.\label{FigIntervalles1}}
}\end{figure}

We describe now the values of $x_{M_j}$, for $j\not=i$. Writing $z:=\beta_i+\beta_j$, the discussion made above -- in particular 3a), 3b), (\ref{eqvaluedeltaij}) and (\ref{eqsigndeltaij}) -- shows the following:
\[\begin{array}{llp{1.5cm}ll}
1]&\mbox{\it if } z=0 &\mbox{\it\ then} & x_{M_j}=0, x_{L_{n-1}}<x_{M_j}<x_{L_n};\\ 
 2]&\mbox{\it if }z=\pi/2 & \mbox{\it\ then} & x_{M_j}<-n^5;\\
 3]&\mbox{\it if } z<0 \mbox{\it\ or if\ } \pi/2<z<\pi& \mbox{\it\ then} &\tan(z)+3/n^8<x_{M_j}<\tan(z)+1/n^2;\\
 4]&\mbox{\it if } 0<z<\beta_i & \mbox{\it\ then} &\tan(z)-1/n^2<x_{M_j}<\tan(z)-3/n^8;\\
 5]&\mbox{\it if } \beta_i<z<\pi/2 & \mbox{\it\ then} &\tan(z)+3/n^8<x_{M_j}<\tan(z)+1/n^2.
\end{array}\]
We obtain the situation of Figure \ref{FigIntervalles2} (note that $x_{Y_0}=b_i$ and that there is no $x_{M_j}$ near $\tan(2\beta_i)$).
\begin{figure}[ht]{\includegraphics[width=11cm]{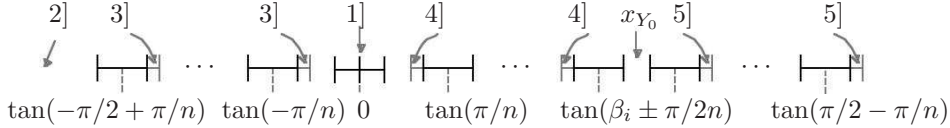}
\drawat{-70.5mm}{9.5mm}{$1]$}%
\drawat{-110mm}{9.5mm}{$2]$}%
\drawat{-101.8mm}{9.5mm}{$3]$}%
\drawat{-82mm}{9.5mm}{$3]$}%
\drawat{-60mm}{9.5mm}{$4]$}%
\drawat{-41mm}{9.5mm}{$4]$}%
\drawat{-27mm}{9.5mm}{$5]$}%
\drawat{-34mm}{9.5mm}{$x_{Y_0}$}%
\drawat{-7mm}{9.5mm}{$5]$}%
\drawat{-115.5mm}{-3.2mm}{$\tan(-\pi/2+\pi/n)$}%
\drawat{-87mm}{-3.2mm}{$\tan(-\pi/n)$}%
\drawat{-60mm}{-3.2mm}{$\tan(\pi/n)$}%
\drawat{-42mm}{-3.2mm}{$\tan(\beta_i\pm \pi/2n)$}%
\drawat{-14mm}{-3.2mm}{$\tan(\pi/2-\pi/n)$}%
\drawat{-69mm}{-3.2mm}{$0$}%
\drawat{-18mm}{2.8mm}{$\cdots$}%
\drawat{-92mm}{2.8mm}{$\cdots$}%
\drawat{-50mm}{2.8mm}{$\cdots$}%
\caption{The places of $x_{M_j}$, depending on the value of $z=\beta_i+\beta_j$.\label{FigIntervalles2}}
}\end{figure}

In particular,  every element  of $U=\{x_{M_j} | j=1,...,n, j\not=i \} \cup \{x_{Y_0}\}$ is between two consecutive $x_{L_j}$'s. Furthermore, between two ${x_{L_j}}$'s there is exactly one element of $U$, except for one place (near $\tan(2\beta_i)$), where there is no element of $U$.

Doing the same for every $M_i$ (the situation for $\beta_i<0$ is similar, as the construction of the $M_j$'s is symmetric), we see that between two consecuting intervals containing one $a_j$ (of the form $]a_j-3/n^9;a_j+3/n^9[$ or $]0;1/n+3/n^9[$ or $]-1/n-3/n^9;0[$ ), exactly $n/2$ pairs of the lines $M_1,...,M_n, Y_0$ intersect.  Furthermore, these intersect only one triangle of $\mathcal{A}$, which is the triangle touching $Y_0$ (see \ref{eqLiLj} and \ref{eqLiMj}).
We obtain the situation of Figure \ref{FigEi}.

\begin{figure}[ht]
{\includegraphics[height=3cm]{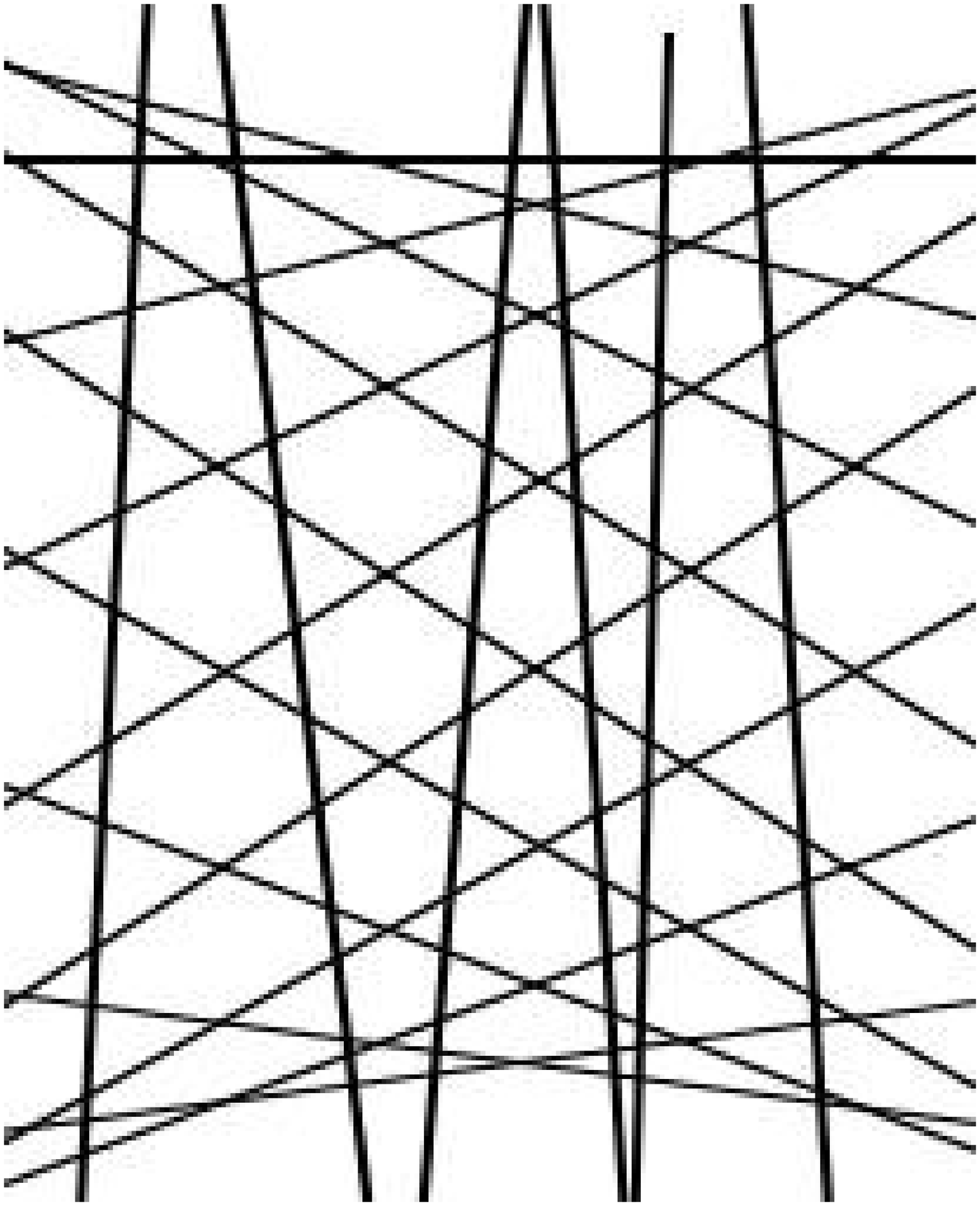}}\drawat{1mm}{25mm}{$Y_0$}%
\hspace{2 cm} {\includegraphics[height=3cm]{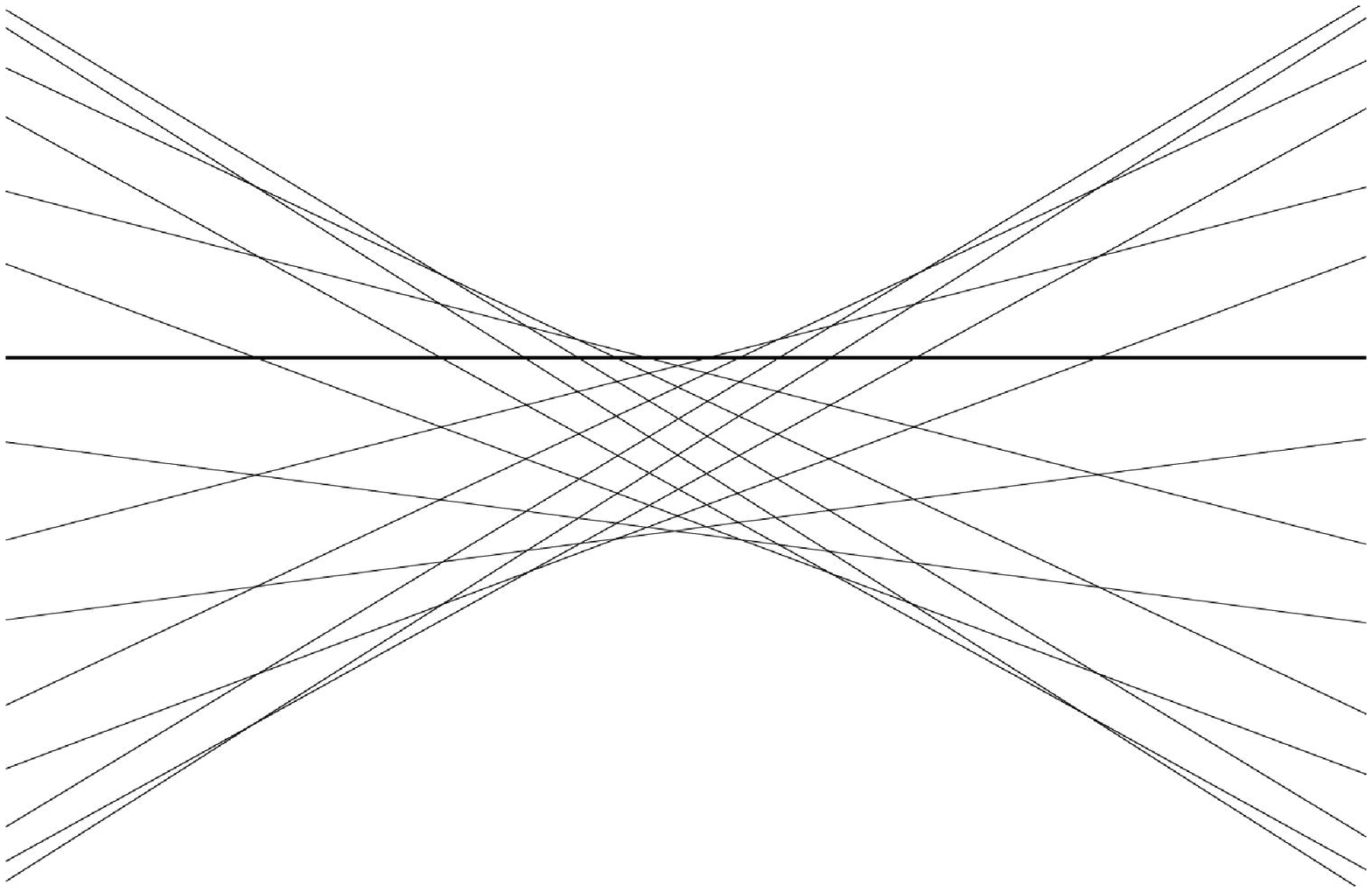}}\drawat{0mm}{18mm}{$Y_0$}
 \caption{The configurations of the lines $M_{1},M_{2},\dots,M_{n}$ and $Y_0$ -- with at the left also some of the lines $L_{1},...,L_{n}$.\label{FigEi}}\end{figure}

Each of the $n-1$ triangles of $\mathcal{A}$ that touches $Y_0$ is replaced in $\mathcal{B}$ by $n+1$ triangles, and we obtain also $n$ triangles at the extremities (formed by the lines $M_i$ and $M_j$ with $\beta_i+\beta_j=\pm \pi/2$). The arrangement $\mathcal{B}$ has thus exactly $n^2$ triangles more than $\mathcal{A}$, and every segment of $Y_0\subset \mathcal{B}$ is used in one triangle. This achieves to prove the Proposition.
\end{proof}

\begin{corollary}\label{Coro:PseudoSeq}
Let $n>1$ be an odd integer, write $m:=2n-1$ and assume that $\overline{a_3^s}(n)> n(n-3)/3$. 

We have $\overline{a_3^s}(m)\geq \overline{a_3^s}(n)+(n-1)^2$, i.e. $m(m-2)/3-\overline{a_3^s}(m)\leq n(n-2)/3-\overline{a_3^s}(n)$.
In particular, if $\overline{a_3^s}(n)=\lfloor n(n-2)/3 \rfloor$ then $\overline{a_3^s}(m)=\lfloor m(m-2)/3 \rfloor$.
\end{corollary}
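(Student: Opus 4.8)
The plan is to deduce the statement from Proposition \ref{Prp:duplicationG}, applied with its parameter equal to $n-1$; this is legitimate because $n>1$ is odd, so $n-1\geq 2$ is even, the starting arrangement then has $(n-1)+1=n$ (pseudo-)lines and the output has $2(n-1)+1=2n-1=m$ of them. First I would dispose of the purely arithmetic content. A direct computation gives $n(n-2)/3+(n-1)^2=m(m-2)/3$, i.e.\ $m(m-2)/3-n(n-2)/3=(n-1)^2$, so the two displayed inequalities are literally the same assertion, and it suffices to prove $\overline{a_3^s}(m)\geq \overline{a_3^s}(n)+(n-1)^2$.

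To prove that inequality I would construct an arrangement of $m$ pseudo-lines with $\overline{a_3^s}(n)+(n-1)^2$ triangles, and the first real step is to locate a pseudo-line that can play the role of $Y_0$. Fix an optimal arrangement $\mathcal{A}$ of $n$ pseudo-lines, with $\overline{a_3^s}(n)$ triangles. Each pseudo-line carries $n-2$ bounded segments, so there are $n(n-2)$ segments in all; since each segment is a side of at most one triangle (two distinct triangles sharing a side would force two lines to meet twice), the number of used segments equals $3\,\overline{a_3^s}(n)$. The hypothesis $\overline{a_3^s}(n)>n(n-3)/3$ then gives $3\,\overline{a_3^s}(n)>n(n-3)=n(n-2)-n$, so there are strictly fewer than $n$ unused segments. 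By pigeonhole, some pseudo-line $Y_0$ has all of its $n-2$ segments used, i.e.\ touches exactly $n-2$ triangles. This is exactly the hypothesis on $Y_0$ demanded by Proposition \ref{Prp:duplicationG} with parameter $n-1$, and it is precisely here that the assumption $\overline{a_3^s}(n)>n(n-3)/3$ is consumed.

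Next I would invoke the duplication of Proposition \ref{Prp:duplicationG} to insert $n-1$ new pseudo-lines in a thin neighbourhood of $Y_0$. By the count recorded there, each of the $n-2$ triangles touching $Y_0$ is replaced by $n$ triangles, a further $n-1$ triangles appear at the two ends, and the rest of $\mathcal{A}$ is undisturbed (cf.\ Figure \ref{FigEi}); the net gain is exactly $(n-2)(n-1)+(n-1)=(n-1)^2$ triangles. This yields an arrangement of $m=2n-1$ pseudo-lines with $\overline{a_3^s}(n)+(n-1)^2$ triangles, hence $\overline{a_3^s}(m)\geq \overline{a_3^s}(n)+(n-1)^2$. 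The ``in particular'' clause is then immediate: if $\overline{a_3^s}(n)=\lfloor n(n-2)/3\rfloor$, then because $(n-1)^2\in\mathbb{Z}$ and $n(n-2)/3+(n-1)^2=m(m-2)/3$ we obtain $\overline{a_3^s}(m)\geq \lfloor n(n-2)/3\rfloor+(n-1)^2=\lfloor m(m-2)/3\rfloor$, and the matching upper bound $\overline{a_3^s}(m)\leq \lfloor m(m-2)/3\rfloor$ coming from (\ref{stupidbound}) forces equality.

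The hard part will be the gap between the hypotheses of Proposition \ref{Prp:duplicationG}, which is phrased for \emph{straight} lines in a very rigid position (the $a_i$ are prescribed tangents and the $\mu_i,\beta_i$ are given by explicit formulas), and the corollary, whose input is an \emph{arbitrary} optimal pseudo-line arrangement that need not be stretchable to that special form. The point I would argue carefully is that the duplication is genuinely a local topological modification: its combinatorial effect depends only on the cyclic order of the segments and triangles along $Y_0$, which is the same data for any arrangement in which $Y_0$ touches all of its segments. Because we work with pseudo-lines we have exactly the flexibility needed to realise the local picture of Figure \ref{FigEi} inside $\mathcal{A}$, extending each inserted curve to a pseudo-line that meets every old pseudo-line exactly once while keeping the arrangement simple. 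Checking that this extension can always be carried out without creating spurious incidences away from $Y_0$, so that the triangle increase is exactly $(n-1)^2$, is the step that requires genuine care rather than routine computation.
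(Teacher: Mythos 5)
Your proposal is correct and is essentially the paper's own proof: locate a pseudo-line $Y_0$ all of whose $n-2$ segments are used (your pigeonhole argument makes explicit what the paper merely asserts), apply Proposition \ref{Prp:duplicationG} with parameter $n-1$ to gain $(n-1)^2$ triangles, and conclude with the arithmetic identity $n(n-2)/3+(n-1)^2=m(m-2)/3$ together with the upper bound (\ref{stupidbound}). The ``hard part'' you flag is handled in the paper exactly as you sketch it --- one stretches $Y_0$ to the line $y=0$ and the segments of the other pseudo-lines touching $Y_0$ to straight segments so that the hypotheses of Proposition \ref{Prp:duplicationG} hold locally, then inserts the lines $M_j$ --- and the paper gives no more detail on this step than you do.
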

\begin{proof}
Let $\mathcal{A}$ be an affine arrangement of $n$ pseudo-lines with more than $n(n-3)/3$ triangles. There exists one pseudo-lines $Y_0\in \mathcal{A}$ that touches $n-2$ triangles of the arrangement; we stretch this to the line $y=0$, arrange the intersections of $Y_0$ with the other pseudo-lines $L_1,...,L_{n-1}$ to satisfy the conditions of Proposition \ref{Prp:duplicationG}, and stretch every pseudo-line  $L_i\in \mathcal{A}\backslash Y_0$ so that the segments of $L_i$ that touch $Y_0$ become segments of lines. Then, adding the lines $M_1,...,M_n$ of Proposition \ref{Prp:duplicationG} to our arrangement gives an arrangement of $2n-1$ pseudo-lines with $(n-1)^2$ triangles more than $\mathcal{A}$.
\end{proof}

We are now able to prove Theorem \ref{TheoNewSequence}, with the help of the following new maximal arrangement:
\begin{figure}[ht]{\includegraphics[width=11.6 cm]{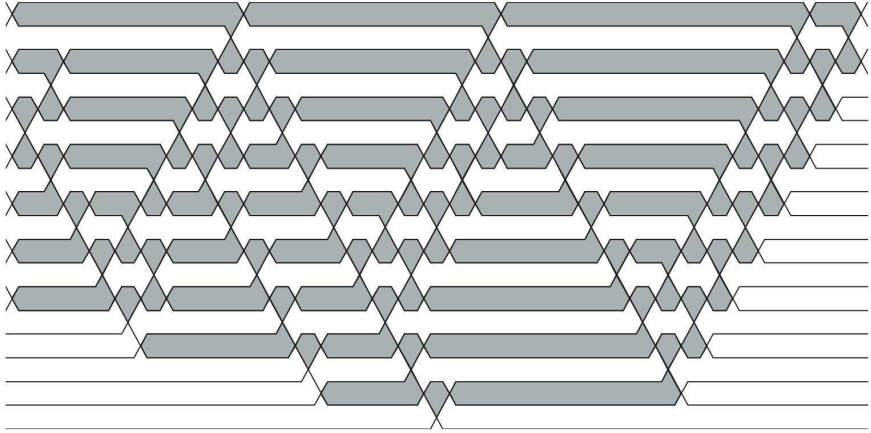}} \caption{A maximal affine arrangement of $19$ pseudo-lines with $107$ triangles  \label{Fig19}}\end{figure}

\begin{proof}[Proof of Theorem \ref{TheoNewSequence}]

1. There exists a maximal affine arrangement of $n=15$ lines with $n(n-2)/3$ triangles, found by Simmons in 1972 (see \cite{bib:Sim}). We give a new one, having the properties needed to apply Proposition \ref{Prp:duplicationG}.
For any $\epsilon >0$, let $\mathcal{A}_{\epsilon}$ be the arrangement $\{L_1,...,L_{15}\}$ of $15$ lines given by 
\begin{center} $L_i:=\{(x,y) \in \Rn \ | \ y=m_i (x-a_i)\}$,\end{center}
where

\begin{center}$\begin{array}{lllp{1 cm}lll}
a_1=\tan(-6\pi/14)& & m_1=1.66 & & a_{9~}=\tan(2\pi/14)& & m_9=-12.4\\
a_2=\tan(-5\pi/14)& & m_2=4.4& & a_{10}=\tan(3\pi/14)& & m_{10}=-22\\
a_3=\tan(-4\pi/14)& & m_3=3.28& & a_{11}=\tan(4\pi/14)& & m_{11}=-4.8\\
a_4=\tan(-3\pi/14)& & m_4=14.4& & a_{12}=\tan(5\pi/14)& & m_{12}=-5.3\\
a_5=\tan(-2\pi/14)& & m_5=13.1& & a_{13}=\tan(6\pi/14)& & m_{13}=-1.86\\
a_6=\tan(-\pi/14)& & m_6=-65& & a_{14}=-\epsilon& & m_{14}=50\\
a_7=0& & m_7=0& & a_{15}=\epsilon & & m_{15}=-45\\
a_8=\tan(\pi/14)& & m_8=-52
\end{array}$
\end{center}

\begin{figure}[ht]{\includegraphics[width=9 cm]{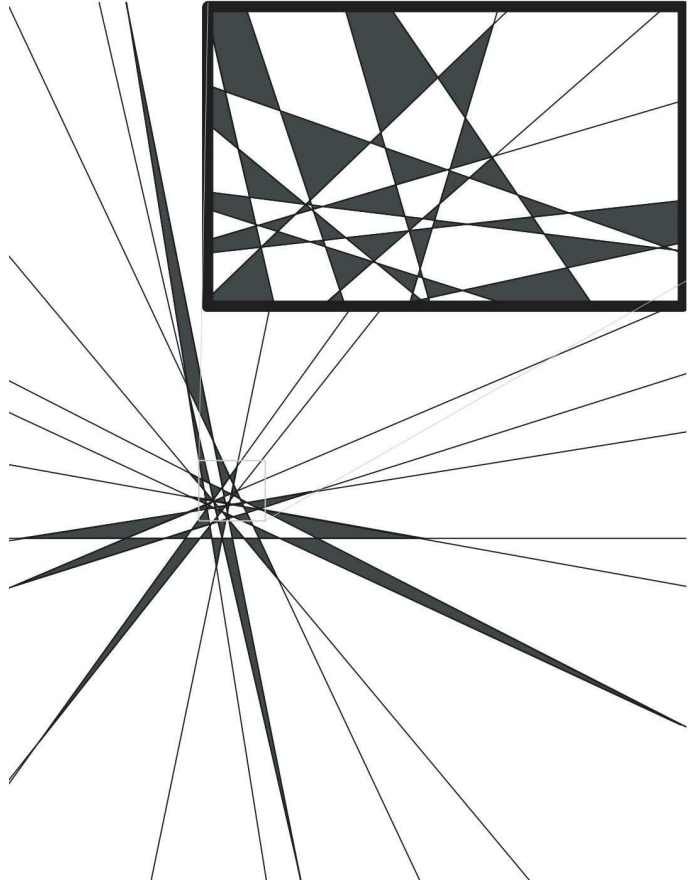}} \caption{The perfect arrangement  $\mathcal{A}_{\epsilon}$ of $15$ lines, beginning of the induction.\label{Fig15}}\end{figure}

We can verify by inspection that the configuration is perfect (see Figure \ref{Fig15}), if $\epsilon$ is small enough. 
Furthermore, we can apply Proposition \ref{Prp:duplicationG} to get a perfect arrangement of $29$ lines. By iterating,  starting from $\mathcal{A}_{\epsilon}$, for a small $\epsilon$ (which depends on $t$), one obtains an  arrangement of $14 \cdot 2^t +1$ lines, for any integer $t\geq1$.

2. For any $\epsilon >0$, let $\mathcal{A}_{\epsilon}$ be the arrangement $\{L_1,...,L_{7}\}$ of $7$ lines given by 
\begin{center} $L_i:=\{(x,y) \in \Rn \ | \ y=m_i (x-a_i)\}$,\end{center}
where

\begin{center}$\begin{array}{lllp{1 cm}lll}
a_1=\tan(-2\pi/6)& & m_1=3 & & a_{5}=\tan(2\pi/6)& & m_5=-3\\
a_2=\tan(-\pi/6)& & m_2=1& & a_{6}=-\epsilon & & m_{6}=-7\\
a_3=\tan0& & m_3=0& & a_{7}=\epsilon & & m_{7}=7\\
a_4=\tan(\pi/6)& & m_4=-1\\
\end{array}$
\end{center}

\begin{figure}[ht]{\includegraphics[width=6 cm]{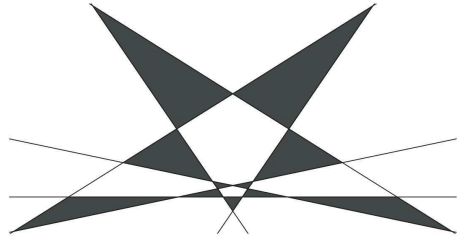}} \caption{The maximal arrangement  $\mathcal{A}_{\epsilon}$ of $7$ lines, beginning of the induction.\label{Fig7}}\end{figure}
We see that the arrangement has $11$ triangles, and use Proposition \ref{Prp:duplicationG}  to get an maximal arrangement of $13$ lines. By iterating, one gets -- for any integer $t\geq1$ -- a maximal arrangements of $n=6 \cdot 2^t +1$ lines, with $\lfloor n(n-2)/3\rfloor$ triangles.

3. The arrangement of Figure \ref{Fig19} is a maximal arrangement of $19$ pseudo-lines with $107$ triangles. Iterating Corollary \ref{Coro:PseudoSeq} we find -- for any integer $t\geq1$ -- a maximal arrangements of $n=18 \cdot 2^t +1$ pseudo-lines, with $\lfloor n(n-2)/3\rfloor$ triangles.
\end{proof}

\section{Description of the computer algorithm}\label{Sec:DescAlgo}
In this Section, we discuss a computer algorithm to search for affine pseudo-line arrangements with many triangles. The problem of finding line arrangements with many triangles is a
geometrical one. It is possible to formulate a related combinatorial
problem for pseudo-line arrangements. We will work with \emph{wiring diagrams} (introduced by Goodman \cite{bib:Goo}), see Figures \ref{Fig19} and \ref{FigXXX}. In this representation the $n$ curves are  $x$-monotone and are restricted to n $y$-co$y$-coordinates except for some local switches where adjacent lines cross.

The information of an affine arrangement of $n$ pseudo-lines is stored into  a $(n-1)\times m $ matrix $M$, where $m$
 is some positive integer. Each column contains some $X$'s and describes the crossings at some $x$-coordinate; an "$X$" at the height $i$ means that the pseudo-lines $i$ and $i+1$ intersect there. We typically add suggestive horizontal lines to these matrices to obtain
pseudo-line diagrams as seen in Figure \ref{FigXXX}.
\begin{figure}[ht]
{\includegraphics[width=5cm]{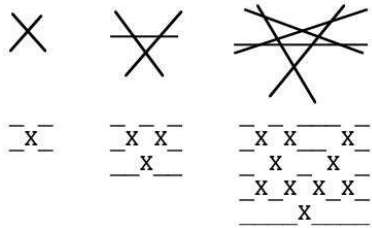}} 
\caption{Pseudo-line Diagrams describing the line configurations above
\label{FigXXX}}
\end{figure}

The polygons of pseudo-line affine arrangements represented by the matrices are easy to compute and the
notation lends itself to several {}``pruning'' ideas.
From now on, we will write $M=(M_{1},...,M_{m})$ and refer to this
matrix as the pseudo-line diagram. The search algorithm is:

\begin{flushleft}
{\tt
Function~depth\_first\_search($M$)

Denote~$M=(M_{1},...,M_{k})$.

If~$M$~is~a~pseudo-line~affine~arrangement

~~~~Count~its~triangles

else

~~~~Generate~the~list~$L$~of~all~possible~choices~of~$M_{k+1}$.

~~~~For~each~$M_{k+1}\in L$,

~~~~~~~~depth\_first\_search($(M_{1},...,M_{k},M_{k+1})$)

~~~~End~for

End~if}
\end{flushleft}
We add some "pruning criteria" to reduce the search:
\begin{enumerate}
\item
In any given column, no two crosses may be adjacent.
\item
It is not permitted to put a cross between two pseudo-lines that have
already crossed.
\item
Without loss of generality, we may impose
that all crosses be placed as far to the left as possible.
\end{enumerate}
A vector $(M_{1},...,M_{k})$ with insufficiently many intersections
but otherwise satisfying the three above properties is called a \emph{partial} or \emph{incomplete} affine arrangement. Although an arrangement is incomplete we are able to compute its triangles and to see that some segments are already \emph{unused} (i.e. not touching a triangle of the future complete arrangement). Since we are looking for diagrams with many triangles, we must have
few unused edges -- this allows us to discard some partial arrangements without compromising the search.
\begin{enumerate}
\item
If on column $k$ we put a cross in row $j$ that closes a triangle,
then the polygons in column $k$ and rows $j-1$ and $j+1$ cannot
be triangles.
\item
If the budget of unused segments is exhausted, we will have some forced dispositions of the crosses, to ensure that every remaining segment will touch one triangle.
\end{enumerate}
\section{Computer Results}
We have looked for maximal affine arrangements of $n$ pseudo-lines. Perfect arrangements are only possible when $n\equiv3\textrm{ or }5\pmod{6}$,
 and we achieved such for $n=3,5,9,15,17,21,23,27,29$. Because several
of our heuristics exploit the low number of unused edges, as the
unused edge budget increases, the search quickly becomes intractable when looking for imperfect arrangements.

\begin{proposition}
\label{Prp:computr}
The maximum number of affine triangles in a pseudo-line arrangement found by our algorithm are given in the following tables:

\begin{center}$\begin{array}{|c|c|c|c|c|c|c|c|c|c|c|c|c|c|c|c|c|c|c|c|c|c|c|c|c|c|c|c|}
\hline
n & 3 & 4 & 5 & 6 & 7 & 8 & 9 & 10 & 11 & 12 & 13  \\
\hline
\overline{a_3^s}(n)&1 & 2 & 5 & 7 & 11 &  14 & 21 & 25 &  32 & 37 & 47  \\
\hline
\end{array}$

\vspace{0.1 cm}

$\begin{array}{|c|c|c|c|c|c|c|c|c|c|c|c|c|c|c|c|c|c|c|c|c|c|c|c|c|c|c|c|}
\hline
n  & 14 & 15 & 17 & 19 & 21 & 23 & 27  & 29 \\
\hline
\overline{a_3^s}(n)  & 53 & 65 & 85&107 &133 &  {161} & { 225} & 261  \\
\hline
\end{array}$
\end{center}
\end{proposition}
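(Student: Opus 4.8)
The statement is computational, so the plan is to establish, for each tabulated pair $(n,\overline{a_3^s}(n))$, both that some simple affine pseudo-line arrangement attains the listed value and that none attains more. The lower bound is the transparent half: for every $n$ in the table I would exhibit an explicit wiring diagram realising the stated count — the arrangement of Figure \ref{Fig19} gives $107$ triangles for $n=19$, the arrangements built in the proof of Theorem \ref{TheoNewSequence} cover several values directly (e.g. $n=7,15,19$, and $n=13,29$ after one step of Proposition \ref{Prp:duplicationG}), and for the remaining values the diagram returned by the search of Section \ref{Sec:DescAlgo} serves as a witness. These certify lower bounds on $\overline{a_3^s}(n)$, so the whole difficulty lies in the matching upper bounds, which is precisely what the exhaustive search is designed to guarantee.

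First I would fix the combinatorial model. By Goodman's correspondence (\cite{bib:Goo}), every simple affine arrangement of $n$ pseudo-lines is encoded, up to the equivalence preserving all polygon counts, by a wiring diagram, i.e. by a matrix $M=(M_1,\dots,M_m)$ of the type described in Section \ref{Sec:DescAlgo}, in which each $X$ records a crossing of two adjacent wires. The depth-first procedure enumerates these matrices column by column, so to prove that the recorded number is a genuine maximum it suffices to prove that the enumeration is \emph{exhaustive} up to this equivalence, and that every pruning rule is \emph{sound}, i.e. discards only prefixes admitting no completion with more triangles than the current record.

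The heart of the argument is therefore a case-by-case verification of the five criteria. The three normalisation criteria (no two adjacent crosses in a column, no second crossing of an already-crossed pair, crosses pushed maximally to the left) are handled by showing that each combinatorial class of arrangements has a canonical representative of this shape, so restricting to such $M$ loses no class. The two budget criteria rest on the elementary count behind (\ref{stupidbound}): an arrangement with $T$ triangles uses at most $3T$ of the $n(n-2)$ available segments, so a target of $T$ triangles tolerates only $n(n-2)-3T$ unused segments. For a partial diagram one can already flag segments forced to remain unused in every completion; criterion (4) (two triangles cannot be closed in adjacent rows of one column) and criterion (5) (once the budget is spent the remaining crosses are forced) are then both justified by checking that violating them drives the number of unavoidable unused segments past this budget, whence the pruned prefixes cannot beat the record.

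The main obstacle is not any single step but the combinatorial explosion: the number of wiring diagrams grows super-exponentially in $n$, and the pruning is effective only when the unused-edge budget is small. This is why the search terminates and certifies optimality comfortably in the perfect range $n\equiv 3,5\pmod 6$, where the budget is zero, but becomes progressively harder as the budget grows. The honest content of the proposition is thus that, for each listed $n$, the pruned search ran to completion and returned the tabulated value, and the soundness of the five criteria guarantees that this value is the true maximum.
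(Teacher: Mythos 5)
First, a point of orientation: the paper contains no proof of Proposition \ref{Prp:computr} at all. It is a bare report of the output of the search of Section \ref{Sec:DescAlgo}, and the identification of the tabulated numbers with the true values $\overline{a_3^s}(n)$ is made only in Theorem \ref{thm:resultssmallvalues}, by combining the computer output with the theoretical bounds of Section \ref{Sec:Intro}. Your reconstruction (witnesses for the lower bounds, exhaustive enumeration with sound pruning for the upper bounds) is a legitimate thing to attempt, and its lower-bound half, together with your discussion of the wiring-diagram encoding and of the soundness of the five criteria, is reasonable.

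The genuine gap is where you locate the upper bounds. You rest all of them on the claim that ``for each listed $n$, the pruned search ran to completion.'' That claim is false in practice for the large entries (an exhaustive enumeration on $21$, $23$, $27$, $29$ wires is hopeless, and the paper itself says the search becomes intractable once the unused-segment budget is positive), and, crucially, it is unnecessary for most of the table. For every listed $n$ except $n=8,11,12,14$ the tabulated value attains a bound already proved in the paper: $n(n-2)/3$ from (\ref{stupidbound}) for $n\equiv 3,5\pmod 6$ (entries $1,5,21,65,85,133,161,225,261$), its integer part for $n\equiv 1\pmod 6$ (entries $11,47,107$), and the bound of Theorem \ref{Thm:Affinebound} for even $n$ (entries $2,7,25$). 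For those $n$ a single witness certifies optimality, so the algorithm is needed only as a producer of witnesses; in particular your assertion that the perfect range is certified because ``the search terminates \dots\ where the budget is zero'' inverts the logic --- there the search may stop the moment one perfect arrangement is found, the counting bound doing the certifying. (For $n=11$, which lies in that range, the situation is different again: the completed budget-zero search finds nothing, which is what rules out $33$ triangles, and a witness then gives $32$.) The only entries whose correctness genuinely rests on a completed exhaustive run with sound pruning are $n=8,11,12,14$ (values $14,32,37,53$, each strictly below every bound proved in the paper), and these are small enough for the search to finish. As written, your plan cannot be executed for the large $n$ in the table, and it misses the division of labour --- algorithm for witnesses plus exhaustion at four small exceptional $n$, theoretical bounds everywhere else --- on which the paper's Theorem \ref{thm:resultssmallvalues} actually relies.
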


\end{document}